\renewcommand{\baselinestretch}{1.2}
\newcommand{\dated}{\mbox{} \hfill {\small [{\tt \today}]}} \usepackage{amsmath,amssymb,amsfonts,diagrams}
\newenvironment{keywords}{\noindent\small {\it Keywords\/}:}{\vskip 4pt}
\newenvironment{classification}{\noindent\small 2000 {\it Mathematics Subject
Classification\/}:}{\vskip 12pt}
\newcommand{\comps}{{\mathbb C}}
\newcommand{\ints}{{\mathbb Z}}
\newcommand{\free}{{\mathbb F}}
\newcommand{\tensor}{\otimes}
\newcommand{\cstar}{{C^\ast}}
\newcommand{\wstar}{{W^\ast}}
\newcommand{\id}{{\mathrm{id}}}
\newcommand{\A}{{\mathfrak A}}
\theoremstyle{plain}
\newtheorem{theorem}{Theorem}[section]
\newtheorem{lemma}[theorem]{Lemma}
\newtheorem{corollary}[theorem]{Corollary}
\newtheorem{proposition}[theorem]{Proposition}
\theoremstyle{definition}
\newtheorem{definition}[theorem]{Definition}
\theoremstyle{remark}
\newtheorem*{remark}{Remark}
\newtheorem*{example}{Example}
\newtheorem*{rems}{Remarks}
\newtheorem*{exs}{Examples}
\newenvironment{remarks}{\begin{rems}\begin{enumerate}}{\end{enumerate}\end{rems}}
\newenvironment{examples}{\begin{exs}\begin{enumerate}}{\end{enumerate}\end{exs}}
\newenvironment{items}{\begin{enumerate}[\rm (i)]}{\end{enumerate}}
\newenvironment{alphitems}{\begin{enumerate}[\rm (a)]}{\end{enumerate}}
\newcommand{\Csw}{\mathcal{C}_\sigma^w}
\newcommand{\LCsw}{\mathcal{LC}_\sigma^w}
\newcommand{\RCsw}{\mathcal{RC}_\sigma^w}
\title{Connes-amenability of Fourier--Stieltjes algebras}
\author{\textit{Volker Runde}\thanks{Research supported by NSERC.} \and \textit{Faruk Uygul}}
\date{}
\begin{document}
\maketitle
\begin{abstract}
Let $G$ be a locally compact group, and let $B(G)$ denote its Fourier--Stieltjes algebra. We show that $B(G)$ is Connes-amenable if and only if $G$ is almost abelian.
\end{abstract}
\begin{keywords}
almost abelian group; Connes-amenability; $\Csw$-virtual diagonal; Fourier--Stieltjes algebra. 
\end{keywords}
\begin{classification}
Primary: 46H20; Secondary: 22D05, 22D25, 43A30, 46H25, 46J10, 46J40.
\end{classification}
\section*{Introduction}
Loosely speaking, a \emph{dual Banach algebra} is a Banach algebra that is also a dual Banach space such that multiplication is separately weak$^\ast$ continuous. Examples of dual Banach algebras are, for instance, von Neumann algebras and measure algebras of locally compact groups. The name tag ``dual Banach algebra'' was introduced in \cite{Runde01}, but the concept is much older. Already in \cite{JohnsonKadisonRingrose72}, B.\ E.\ Johnson, R.\ V.\ Kadison, and J.\ R.\ Ringrose used them in their development of normal cohomology of von Neumann algebras: they did some of that development in the framework of general dual Banach algebras, noting that this was all they needed from a von Neumann algebra. Indeed, a surprisingly large part of von Neumann algebra theory can be extended to general dual Banach algebras: for instance, they have a rich representation that parallels that of von Neumann algebras, in which reflexive Banach spaces play the r\^ole of Hilbert spaces (\cite{Daws07}), and in which a version of von Neumann's bicommutant theory holds true (\cite{Daws11}).
\par 
In his memoir \cite{Johnson72a}, B.\ E.\ Johnson initiated the theory of amenable Banach algebras, which has been a very active branch of mathematics ever since. It seems, however, that amenability in the sense of \cite{Johnson72a} is not very well suited for the study of dual Banach algebras: it appears to be too strong to allow for many interesting examples. For instance, von Neumann algebras are amenable if and only if they are subhomogeneous (\cite{Wassermann76}), and the measure algebra $M(G)$ of a locally compact group $G$ is amenable if and only if $G$ is discrete and amenable (\cite{DalesGhahramaniHelemskii02}). 
\par  
In his groundbreaking paper \cite{Connes76}, A.\ Connes introduced a notion of amenability for von Neumann algebras that takes the dual space structure into account (it was later labeled \emph{Connes-amenability} in his honor by A.\ Ya.\ Helemski\u{\i} in \cite{Helemskii91}). Connes-amenability is far better suited for the study of von Neumann algebras as it is equivalent to important von Neumann algebraic properties, such as injectivity or hyperfiniteness. Also, for measure algebras it is far better suited: the measure algebra $M(G)$ of a locally compact group $G$ is Connes-amenable if and only if $G$ is amenable (\cite{Runde03a}). 
\par
The \emph{Fourier--Stieltjes algebra} $B(G)$ of a locally compact group $G$ was introduced by P.\ Eymard in \cite{Eymard64}. It is a dual Banach algebra with natural predual $\cstar(G)$, the (full) group $\cstar$-algebra of $G$. The natural question arises which locally compact groups $G$ have a Connes-amenable Fourier--Stieltjes algebra $B(G)$. With an eye on the main results of \cite{Runde03a} and \cite{ForrestRunde05}, one is led to the conjecture that $B(G)$ is Connes-amenable if and only if $G$ is \emph{almost abelian}, i.e., has an abelian subgroup of finite index (with the ``if'' part being fairly straightforward; see \cite[Proposition 3.1]{Runde04}). There has been some circumstantial evidence suggesting that the ``only if'' part of the conjecture might also be true, meaning that it has been corroborated in certain special cases: it definitely holds true if $G$ is discrete and amenable (\cite[Theorem 3.5]{Runde04}) or the topological product of a family of finite groups (\cite[Theorem 3.4]{Runde04}).
\par 
In the present paper, we prove this conjecture in full generality. 
\section{Dual Banach algebras and Connes-amenability}
We begin with a formally rigorous definition of a dual Banach algebra:
\begin{definition} \label{dualdef}
A \emph{dual Banach algebra} is a pair $(\A,\A_\ast)$ of Banach spaces such that: 
\begin{alphitems}
\item $\A = (\A_\ast)^\ast$; 
\item $\A$ is a Banach algebra, and multiplication in $\A$ is separately $\sigma(\A,\A_\ast)$ continuous.
\end{alphitems}
\end{definition}
\begin{examples}
\item Every von Neumann algebra is a dual Banach algebra; the predual space $\A_\ast$ is then unique (\cite[Corollary III.3.9]{Takesaki02}).
\item Let $G$ be a locally compact group, let $\mathcal{C}_0(G)$ denote the continuous $\comps$-valued functions on $G$ vanishing at $\infty$, and let $M(G)$ denote the measure algebra of $G$. Then $(M(G),\mathcal{C}_0(G))$ is a dual Banach algebra.
\item Let $G$ be a locally compact group, let $\cstar(G)$ and $C^\ast_r(G)$ denote the full and the reduced group $\cstar$-algebra of $G$, respectively, and let $B(G)$ and $B_r(G)$ be the Fourier--Stieltjes algebra and the reduced Fourier--Stieltjes algebra of $G$, respectively (as defined in \cite{Eymard64}). Then both $(B(G),\cstar(G))$ and $(B_r(G),C^\ast_r(G))$ are dual Banach algebras. 
\end{examples}
\begin{remarks}
\item The seemingly pedantic formulation of Definition \ref{dualdef} is necessary because the predual space $\A_\ast$ need not be unique: in \cite{DawsHaydonSchlumprechtWhite12}, the authors construct a continuum of different preduals of the convolution algebra $\ell^1(\ints)$, each of which is isometrically isomorphic to $c_0(\ints)$, and each of which turns $\ell^1(\ints)$ into a dual Banach algebra.
\item In many cases, as in the examples above, there is a canonical predual; in such cases, we shall not explicitly refer to it and simply call $\A$ a dual Banach algebra.
\end{remarks}
\begin{definition}
Let $(\A,\A_\ast)$ be a dual Banach algebra, and let $E$ be a right Banach $\A$-module. Then we call the left Banach $\A$-bimodule $E^\ast$ \emph{normal} if, for each $\phi \in E^\ast$, the maps
\[
  \A \to E^\ast, \quad a \mapsto a \cdot \phi
\]
is $\sigma(\A,\A_\ast)$-$\sigma(E^\ast,E)$ continuous.
\end{definition}
\par 
Analogously, one defines, for a dual Banach algebra $\A$, normal right Banach $\A$-bimodules and normal Banach $\A$-bimodules.
\par 
The notion of a normal Banach bimodule over a dual Banach algebra is crucial for the definition of Connes-amenability:
\begin{definition} \label{Connesdef}
A dual Banach algebra $(\A,\A_\ast)$ is \emph{Connes amenable} if, for every Banach $\A$-bimodule $E$ such that $E^\ast$ is normal, every $\sigma(\A,\A_\ast)$-$\sigma(E^\ast,E)$ continuous derivation $D \!: \A \to E^\ast$ is inner.
\end{definition}
\begin{remark}
In \cite[Remark 5.33]{Connes76}, A.\ Connes referred to von Neumann algebras satisfying Definition \ref{Connesdef} as ``amenable''. In order to tell them apart from amenable von Neumann algebras in the sense of \cite{Johnson72a}, A.\ Ya.\ Helemski\u{\i} introduced the moniker ``Connes-amenable'' in \cite{Helemskii91}.
\end{remark}
\begin{examples}
\item For von Neumann algebras, Connes-amenability is equivalent to a number of important von Neumann algebraic properties, such as injectivity, semidiscreteness, and hyperfiniteness: see \cite{Takesaki03} and also \cite[Chapter 6]{Runde02}. On the other hand, a von Neumann algebra is amenable in the sense of \cite{Johnson72a} if and only if it is subhomogeneous (\cite{Wassermann76}).
\item Let $G$ be a locally compact group. Then $M(G)$ is Connes-amenable if and only if $G$ is amenable (\cite{Runde03a}) whereas $M(G)$ is amenable in the sense of \cite{Johnson72a} if and only if $G$ is \emph{discrete} and amenable (\cite{DalesGhahramaniHelemskii02}).
\end{examples}
\par 
These examples suggest that Banach algebraic amenability, as introduced in \cite{Johnson72a}, is too strong a condition to impose to dual Banach algebras and that Connes-amenability is far better suited for the study of dual Banach algebras.
\section{Connes-amenability and diagonal type elements}
The following definition is from \cite{Runde04}:
\begin{definition}
Let $(\A,\A_\ast)$ be a dual Banach algebra, and let $E$ be a left Banach $\A$-module. An element $x \in E$ is called \emph{left $\sigma$-weakly continuous} if the map
\[
  \A \to E, \quad a \mapsto a \cdot x
\]
is $\sigma(\A,\A_\ast)$-weakly continuous. We denote the collection of all left $\sigma$-weakly continuous elements of $E$ by $\LCsw(E)$. 
\end{definition}
\begin{remarks}
\item If $E$ is a right Banach $\A$-module, we define the set $\RCsw(E)$ of all \emph{right $\sigma$-weakly continuous} elements of $E$ in the obvious way, and if $E$ is a Banach $\A$-bimodule, we set $\Csw(E) := \LCsw(E) \cap \RCsw(E)$ and simply speak of the \emph{$\sigma$-weakly continuous} elements of $E$.
\item It is immediate that $\A_\ast \subset \Csw(\A^\ast)$ for every dual Banach algebra $(\A,\A_\ast)$; if $\A$ has an identity, then $\A_\ast = \LCsw(\A^\ast) = \RCsw(\A^\ast)$ holds.
\item Let $E$ be a non-reflexive Banach space, and turn $E^\ast$ into a Banach algebra by letting $\phi \psi := 0$ for $\phi,\psi \in E^\ast$. Then $(E^\ast,E)$ is trivially a dual Banach algebra without identity, and we have $E \subsetneq E^{\ast\ast} = \Csw(E^{\ast\ast})$.
\end{remarks}
\par
The verification of the following is routine (compare \cite{Runde04}):
\begin{proposition} \label{LCswprop}
Let $(\A,\A_\ast)$ be a dual Banach algebra, and let $E$ be a left Banach $\A$-module. Then:
\begin{items}
\item $\LCsw(E)$ is a closed submodule of $E$;
\item if $F$ is another left Banach $\A$-bimodule and $\theta \!: E \to F$ is a bounded homomorphism of left $\A$-modules, then $\theta(\LCsw(E)) \subset \LCsw(F)$ holds;
\item $\LCsw(E)^\ast$ is a normal right Banach $\A$-module;
\item $E^\ast$ is a normal right Banach $\A$-module if and only if $E = \LCsw(E)$.
\end{items}
\end{proposition}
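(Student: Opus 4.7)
The plan is to reformulate left $\sigma$-weak continuity via adjoints and then apply the reformulation uniformly. For each $x \in E$, let $T_x \!: \A \to E$ denote the bounded linear map $a \mapsto a \cdot x$. By the standard characterization of weak$^\ast$-to-weak continuity between dual pairs, $x \in \LCsw(E)$ if and only if $T_x$ is $\sigma(\A,\A_\ast)$-to-weak continuous, which in turn is equivalent to $T_x^\ast(E^\ast) \subset \A_\ast$ (viewing $\A_\ast$ as a norm-closed subspace of $\A^\ast$). All four assertions fall out of this equivalence without difficulty.

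For (i), if $x_n \to x$ in norm with $x_n \in \LCsw(E)$, then $T_{x_n} \to T_x$ in operator norm, hence $T_{x_n}^\ast \to T_x^\ast$ in operator norm as well; since $\A_\ast$ is norm-closed in $\A^\ast$, the inclusion $T_x^\ast(E^\ast) \subset \A_\ast$ passes to the limit. To see that $\LCsw(E)$ is a submodule, I would use the identity $T_{b \cdot x}(a) = T_x(ab)$ together with the weak$^\ast$-to-weak$^\ast$ continuity of right multiplication on $\A$ --- the one place where the dual Banach algebra hypothesis enters explicitly. For (ii), the factorisation $T^F_{\theta(x)} = \theta \circ T_x$ combined with the norm-to-norm, and hence weak-to-weak, continuity of $\theta$ gives $\theta(\LCsw(E)) \subset \LCsw(F)$.

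For (iii), part (i) ensures that $\LCsw(E)$ is a closed left $\A$-submodule of $E$, so that $\LCsw(E)^\ast$ inherits a natural right Banach $\A$-module structure. Given $\phi \in \LCsw(E)^\ast$, extend it by Hahn--Banach to $\tilde\phi \in E^\ast$; then for every $x \in \LCsw(E)$ the function $a \mapsto \langle \phi \cdot a, x\rangle = \tilde\phi(T_x(a))$ factors as the weakly continuous functional $\tilde\phi$ composed with the weak$^\ast$-to-weak continuous map $T_x$, and is therefore weak$^\ast$ continuous on $\A$; that is exactly the required normality. Part (iv) is then a direct reading of the reformulation: $E^\ast$ is normal precisely when, for each $x \in E$, the map $a \mapsto \phi(a\cdot x)$ is weak$^\ast$ continuous for every $\phi \in E^\ast$, i.e.\ when $T_x^\ast(E^\ast) \subset \A_\ast$ for every $x \in E$, i.e.\ when $E = \LCsw(E)$. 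Nothing here is deep; the only point requiring a moment's care is the use of Hahn--Banach in (iii) to avoid applying $\phi$ to elements outside $\LCsw(E)$.
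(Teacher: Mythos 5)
Your proof is correct, and since the paper itself omits the verification as ``routine'' (deferring to \cite{Runde04}), your write-up simply supplies the standard argument one would expect: the reformulation $x \in \LCsw(E) \iff T_x^\ast(E^\ast) \subset \A_\ast$, the use of separate weak$^\ast$ continuity of multiplication for the submodule property, and Hahn--Banach in (iii) are all exactly the right ingredients. Nothing to add.
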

\begin{remark}
Of course, statements analogous to those of Proposition \ref{LCswprop} hold for right modules and bimodules.
\end{remark} 
\par
Following \cite{EffrosRuan00}, we denote the (completed) projective tensor product of Banach spaces by $\tensor^\gamma$. If $\A$ is a Banach algebra, $E$ is a left Banach $\A$-module, and $F$ is a right Banach $\A$-module, then $E \tensor^\gamma F$ is a Banach $\A$-bimodule via
\[
  a \cdot (x \tensor y) := a \cdot x \tensor y \quad\text{and}\quad (x \tensor y) \cdot a := x \tensor y \cdot a \qquad (a \in \A, \, x \in E, \, y \in F).
\]
In particular, $\A \tensor^\gamma \A$ is a Banach $\A$-bimodule in a canonical manner such that the multiplication map
\[
  \Delta \!: \A \tensor^\gamma \A \to \A, \quad a \tensor b \mapsto ab
\]
is a bimodule homomorphism. A \emph{virtual diagonal} for $\A$ is an element $\boldsymbol{D} \in (\A \tensor^\gamma \A)^{\ast\ast}$ such that
\[
  a \cdot \boldsymbol{D} = \boldsymbol{D} \cdot a \quad\text{and}\quad a \Delta^{\ast\ast}\boldsymbol{D} = a \qquad (a \in \A)
\]
In \cite{Johnson72b}, B.\ E.\ Johnson showed that a Banach algebra is amenable if and only if it has a virtual diagonal.
\par 
There is an analogous characterization of Connes-amenable dual Banach algebras in terms of suitable diagonal type elements.
\par 
Let $(\A,\A_\ast)$ be a dual Banach algebra. As $\A_\ast \subset \Csw(\A^\ast)$, it follows from (the bimodule analog of) Proposition \ref{LCswprop}(ii) that $\Delta^\ast \A_\ast \subset \Csw((\A \tensor^\gamma \A)^\ast)$, so that $\Delta^{\ast\ast} \!: (\A \tensor^\gamma \A)^{\ast\ast} \to \A^{\ast\ast}$ induces a Banach $\A$-bimodule homomorphism $\Delta_\sigma^w \!: \Csw((\A \tensor^\gamma \A)^\ast)^\ast \to \A$.
\par
We define (see \cite{Runde04}):
\begin{definition} \label{CswDdef} 
Let $(\A,\A_\ast)$ be a dual Banach algebra. Then $\boldsymbol{D} \in \Csw((\A \tensor^\gamma \A)^\ast)^\ast$ is called a \emph{$\Csw$-virtual diagonal} for $(\A,\A_\ast)$ if
\[
  a \cdot \boldsymbol{D} = \boldsymbol{D} \cdot a \quad\text{and}\quad a \Delta_\sigma^w \boldsymbol{D} = a \qquad (a \in \A).
\]
\end{definition}
\par 
As it turns out, the existence of a $\Csw$-diagonal characterizes the Connes-amenable dual Banach algebras (\cite[Theorem 4.8]{Runde04}):
\begin{theorem} \label{CswDthm}
The following are equivalent for a dual Banach algebra $(\A,\A_\ast)$:
\begin{items}
\item $(\A,\A_\ast)$ is Connes-amenable;
\item there is a $\Csw$-virtual diagonal for $(\A,\A_\ast)$.
\end{items} 
\end{theorem}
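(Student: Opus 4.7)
The plan is to adapt B.\ E.\ Johnson's classical argument characterizing amenability via virtual diagonals (\cite{Johnson72b}), keeping careful track of $\sigma$-weak continuity at each step. I will prove (ii) $\Rightarrow$ (i) directly, and (i) $\Rightarrow$ (ii) first for unital $\A$, reducing the general case by unitizing $(\A, \A_\ast)$ to $(\A^{\#}, \A_\ast \oplus \comps)$.

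For (ii) $\Rightarrow$ (i), let $\boldsymbol{D}$ be a $\Csw$-virtual diagonal and let $D \!: \A \to E^\ast$ be a $\sigma$-weakly continuous derivation into a normal dual bimodule. I consider the bounded left $\A$-module map $\tilde\psi \!: \A \tensor^\gamma \A \to E^\ast$, $a \tensor b \mapsto a \cdot D(b)$. The first step is to verify that its adjoint sends $E$ into $\Csw((\A \tensor^\gamma \A)^\ast)$: for $x \in E$, left $\sigma$-weak continuity of the functional $a \tensor b \mapsto \langle a \cdot D(b), x \rangle$ uses that $E^\ast$ is normal as a right module; right $\sigma$-weak continuity uses that $D$ is $\sigma$-weakly continuous, together with normality of $E^\ast$, after expanding $D(bc)$ via the derivation identity. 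This produces a bounded operator $T \!: E \to \Csw((\A \tensor^\gamma \A)^\ast)$, and I set $\phi := T^\ast(\boldsymbol{D}) \in E^\ast$. A direct pairing computation, using $a \cdot \boldsymbol{D} = \boldsymbol{D} \cdot a$, $a \Delta_\sigma^w \boldsymbol{D} = a$, and the derivation identity to cancel crossed terms, yields $D(a) = a \cdot \phi - \phi \cdot a$.

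For the converse, assuming $\A$ is unital with identity $1$, I compose the canonical embedding $\A \tensor^\gamma \A \hookrightarrow (\A \tensor^\gamma \A)^{\ast\ast}$ with the quotient map $(\A \tensor^\gamma \A)^{\ast\ast} \twoheadrightarrow \Csw((\A \tensor^\gamma \A)^\ast)^\ast$ dual to the inclusion $\Csw((\A \tensor^\gamma \A)^\ast) \hookrightarrow (\A \tensor^\gamma \A)^\ast$, obtaining an element $m \in \Csw((\A \tensor^\gamma \A)^\ast)^\ast$ which is the image of $1 \tensor 1$ and satisfies $\Delta_\sigma^w m = 1$. The map
\[
  \delta \!: \A \to \Csw((\A \tensor^\gamma \A)^\ast)^\ast, \quad a \mapsto a \cdot m - m \cdot a,
\]
is a $\sigma$-weakly continuous derivation into this normal dual bimodule (Proposition \ref{LCswprop}(iii) and its bimodule analogue), and since $\Delta_\sigma^w \delta(a) = 0$ it lands in the weak$^\ast$-closed sub-bimodule $\ker \Delta_\sigma^w$, which inherits a normal dual bimodule structure as the dual of a quotient of the predual. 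Connes-amenability provides $n \in \ker \Delta_\sigma^w$ with $\delta(a) = a \cdot n - n \cdot a$, and setting $\boldsymbol{D} := m - n$ gives $a \cdot \boldsymbol{D} = \boldsymbol{D} \cdot a$ together with $\Delta_\sigma^w \boldsymbol{D} = 1$, whence $a \Delta_\sigma^w \boldsymbol{D} = a$.

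The main obstacle is the first step of (ii) $\Rightarrow$ (i): in Johnson's original proof one can simply pass to $\tilde\psi^{\ast\ast} \!: (\A \tensor^\gamma \A)^{\ast\ast} \to E^{\ast\ast\ast}$ and then project back to $E^\ast$, but here $\boldsymbol{D}$ lives only in the smaller quotient space $\Csw((\A \tensor^\gamma \A)^\ast)^\ast$, so one must produce a factorization through this quotient that already lands in $E^\ast$ rather than $E^{\ast\ast\ast}$. Verifying that $\tilde\psi^\ast(E) \subset \Csw((\A \tensor^\gamma \A)^\ast)$ is precisely the point at which every hypothesis---namely normality of $E^\ast$ and $\sigma$-weak continuity of $D$---is genuinely invoked.
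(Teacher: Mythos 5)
Your argument is, in outline, the proof of \cite[Theorem 4.8]{Runde04} --- the paper itself does not prove this theorem but only cites that reference --- namely Johnson's virtual-diagonal argument from \cite{Johnson72b} transplanted from $(\A \tensor^\gamma \A)^{\ast\ast}$ to $\Csw((\A \tensor^\gamma \A)^\ast)^\ast$. The one genuine gap is your reduction of (i) $\Rightarrow$ (ii) to the unital case via the unitization $(\A^\#, \A_\ast \oplus \comps)$. The conditions $a \cdot \boldsymbol{D} = \boldsymbol{D} \cdot a$ and $a \Delta_\sigma^w \boldsymbol{D} = a$ force $\Delta_\sigma^w \boldsymbol{D}$ to be a two-sided identity for the algebra, so a non-unital $\A$ admits no $\Csw$-virtual diagonal at all; in particular no cutting-down procedure can descend a diagonal from $\A^\#$ to $\A$, and the reduction as described cannot close the argument. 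The correct route is to invoke \cite[Proposition 4.1]{Runde01} (cited elsewhere in this paper): every Connes-amenable dual Banach algebra already has an identity. With that substitution, your construction of $m$ as the image of $1 \tensor 1$, the inner implementation of $\delta(a) = a \cdot m - m \cdot a$ inside the normal dual bimodule $\ker \Delta_\sigma^w$, and $\boldsymbol{D} := m - n$ is exactly the standard argument.

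In (ii) $\Rightarrow$ (i) you correctly isolate the crux --- that $\tilde\psi^\ast(E) \subset \Csw((\A \tensor^\gamma \A)^\ast)$ --- and it does go through, but one detail deserves to be made explicit, since membership in $\LCsw$ demands $\sigma(\A,\A_\ast)$-to-\emph{weak} (not merely weak$^\ast$) continuity. Expanding $\langle a \tensor b, c \cdot \tilde\psi^\ast(x) \rangle = \langle x, a \cdot D(b) \cdot c \rangle + \langle x, ab \cdot D(c) \rangle$, the second summand equals $\langle a \tensor b, \Delta^\ast \xi_c \rangle$ with $\xi_c(v) := \langle x, v \cdot D(c) \rangle$. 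Normality of $E^\ast$ makes $\xi_c$ a $\sigma(\A,\A_\ast)$-continuous functional, i.e.\ $\xi_c \in \A_\ast$; consequently, pairing $\Delta^\ast \xi_c$ against an arbitrary $F \in (\A \tensor^\gamma \A)^{\ast\ast}$ reduces to pairing $\xi_c$ against the element of $\A = (\A_\ast)^\ast$ obtained by restricting $\Delta^{\ast\ast} F$ to $\A_\ast$, and weak$^\ast$ continuity of $D$ then gives the required continuity in $c$. Without the observation that these cross-term functionals land in $\A_\ast$, one only obtains weak$^\ast$ continuity into $(\A \tensor^\gamma \A)^\ast$, which is not enough. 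Finally, in the concluding computation $D(a) = a \cdot \phi - \phi \cdot a$ you should record the usual reduction to the case where the identity $e = \Delta_\sigma^w \boldsymbol{D}$ acts as the identity on $E$, since the argument actually produces $a \cdot \phi - \phi \cdot a = e \cdot D(a)$ and the leftover pairing $\langle x \cdot e, D(a) \rangle$ must be identified with $\langle x, D(a) \rangle$.
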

\par 
We now look at another class of diagonal type elements for dual Banach algebras. Given a dual Banach algebra $(\A,\A_\ast)$, we denote by $\mathcal{B}^2_\sigma(\A,\comps)$ the separately $\sigma(\A,\A_\ast)$ continuous bilinear maps from $\A \times \A$ to $\comps$; it can be identified with a closed submodule of $(\A \tensor^\gamma \A)^\ast$. It is clear that $\Delta^\ast \A_\ast \subset \mathcal{B}_\sigma^2(\A,\comps)$, so that $\Delta^{\ast\ast}$ induces a Banach $\A$-bimodule homomorphism $\Delta_\sigma \!: \mathcal{B}^2_\sigma(\A,\comps)^\ast \to \A$.
\par
The following definition is from \cite{Effros88}, where it was formulated for von Neumann algebras:
\begin{definition} 
Let $(\A,\A_\ast)$ be a dual Banach algebra. Then $\boldsymbol{D} \in \mathcal{B}^2_\sigma(\A, \comps)^\ast$ is called a \emph{normal, virtual diagonal} for $(\A,\A_\ast)$ if
\[
  a \cdot \boldsymbol{D} = \boldsymbol{D} \cdot a \quad\text{and}\quad a \Delta_\sigma \boldsymbol{D} = a \qquad (a \in \A).
\]
\end{definition}
\par 
The corollary below is well known: it was first already proven for von Neumann algebras in \cite{Effros88}, and it was observed in \cite{CorachGale98} that the proof carries over to general dual Banach algebras. Still, we give an alternative proof that invokes Theorem \ref{CswDthm} instead of the definition of Connes-amenability.
\begin{corollary} \label{VDcor} 
Let $(\A,\A_\ast)$ be a dual Banach with a normal, virtual diagonal. Then $(\A,\A_\ast)$ is Connes-amenable.
\end{corollary}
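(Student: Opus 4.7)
The plan is to invoke Theorem~\ref{CswDthm} by manufacturing a $\Csw$-virtual diagonal out of the given normal virtual diagonal $\boldsymbol{D}$. Both $\mathcal{B}_\sigma^2(\A,\comps)$ and $\Csw((\A \tensor^\gamma \A)^\ast)$ sit as closed $\A$-submodules of $(\A \tensor^\gamma \A)^\ast$, and if I can arrange that the former contains the latter, then restriction will dualize to an $\A$-bimodule map carrying $\boldsymbol{D}$ to a $\Csw$-virtual diagonal.

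First I would observe that the hypothesis forces $\A$ to be unital. Writing $e := \Delta_\sigma \boldsymbol{D}$, the relation $a\, \Delta_\sigma \boldsymbol{D} = a$ says $ae = a$; applying the $\A$-bimodule homomorphism $\Delta_\sigma$ to both sides of $a \cdot \boldsymbol{D} = \boldsymbol{D} \cdot a$ gives $ae = ea$, so $e$ is a two-sided identity for $\A$.

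With $e$ at my disposal, the inclusion $\Csw((\A \tensor^\gamma \A)^\ast) \subseteq \mathcal{B}_\sigma^2(\A,\comps)$ follows quickly. If $\phi \in \LCsw((\A \tensor^\gamma \A)^\ast)$, testing the $\sigma$-weak continuity of $a \mapsto a \cdot \phi$ against the elementary tensor $x \otimes e \in \A \tensor^\gamma \A \subseteq (\A \tensor^\gamma \A)^{\ast\ast}$ shows that $a \mapsto \phi(x \otimes a)$ is $\sigma(\A,\A_\ast)$-continuous for every $x$; the symmetric argument, using $\RCsw$ and $e \otimes y$, handles the first variable. The resulting inclusion $\iota \!: \Csw((\A \tensor^\gamma \A)^\ast) \hookrightarrow \mathcal{B}_\sigma^2(\A,\comps)$ is plainly an $\A$-bimodule homomorphism, and its adjoint $q := \iota^\ast \!: \mathcal{B}_\sigma^2(\A,\comps)^\ast \twoheadrightarrow \Csw((\A \tensor^\gamma \A)^\ast)^\ast$ is a surjective $\A$-bimodule map satisfying $\Delta_\sigma^w \circ q = \Delta_\sigma$, since both sides are induced by $\Delta^{\ast\ast}$ via the common embedding $\Delta^\ast \A_\ast \subseteq \Csw((\A \tensor^\gamma \A)^\ast) \cap \mathcal{B}_\sigma^2(\A,\comps)$.

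Setting $\boldsymbol{D}' := q(\boldsymbol{D})$, the identities $a \cdot \boldsymbol{D} = \boldsymbol{D} \cdot a$ and $a\, \Delta_\sigma \boldsymbol{D} = a$ transfer verbatim to the $\Csw$-side, so $\boldsymbol{D}'$ is a $\Csw$-virtual diagonal and Theorem~\ref{CswDthm} delivers Connes-amenability. The crux of the argument is the inclusion $\Csw \subseteq \mathcal{B}_\sigma^2$: without an identity, $\sigma$-weak continuity of $a \mapsto a \cdot \phi$ (which must be tested against all of $(\A \tensor^\gamma \A)^{\ast\ast}$, not merely elementary tensors) does not obviously force separate $\sigma$-continuity of the associated bilinear form, and it is precisely the extraction of unitality from the hypothesis that unblocks this step.
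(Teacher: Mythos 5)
Your proof is correct and follows essentially the same route as the paper: establish the inclusion $\Csw((\A\tensor^\gamma\A)^\ast)\subset\mathcal{B}^2_\sigma(\A,\comps)$ and then restrict the normal, virtual diagonal to obtain a $\Csw$-virtual diagonal, invoking Theorem~\ref{CswDthm}. The only real difference is cosmetic: you verify the inclusion by pairing against the elementary tensors $x\otimes e$ and $e\otimes y$ after explicitly extracting the identity $e=\Delta_\sigma\boldsymbol{D}$, whereas the paper routes the same inclusion through the maps $\rho_a\colon b\mapsto a\otimes b$ and Proposition~\ref{LCswprop}(ii) together with the equality $\LCsw(\A^\ast)=\A_\ast$ --- an equality that itself presupposes the identity element you take care to make explicit.
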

\begin{proof}
We claim that $\Csw((\A \tensor^\gamma \A)^\ast) \subset \mathcal{B}^2_\sigma(\A, \comps)$ (with the canonical identifications in place, of course).
\par 
Fix $a \in \A$, and define
\[
  \rho_a \!: \A \to \A \tensor \A, \quad b \mapsto a \tensor b.
\]
Then $\rho_a$ is a homomorphism of right Banach $\A$-modules, so that $\rho_a^\ast$ is a homomorphism of left Banach $\A$-modules and, consequently, $\rho_a^\ast(\LCsw((\A \tensor^\gamma \A)^\ast) \subset \LCsw(\A^\ast) = \A_\ast$ by Proposition \ref{LCswprop}(ii). In view of the definition of $\rho_a$ and the fact that $a$ was arbitrary, this means that every element of $\Csw((\A \tensor^\gamma \A)^\ast)$ is weak$^\ast$ continuous in the second variable. Analogously, one sees that the same is true for the first variable.
\par 
Hence, if $\boldsymbol{D} \in \mathcal{C}^2_\sigma(\A, \comps)^\ast$ is a normal, virtual diagonal for $(\A,\A_\ast)$, its restriction to $\Csw((\A \tensor^\gamma \A)^\ast)$ is a $\Csw$-virtual diagonal. By Theorem \ref{CswDthm}, this means that $(\A,\A_\ast)$ is Connes-amenable.
\end{proof}
\begin{remark}
In \cite{Effros88}, it was shown that a von Neumann algebra is Connes-amenable if \emph{and only if} it has a normal, virtual diagonal. The same is true for the measure algebras of locally compact groups (\cite{Runde03a} and \cite{Runde03b}). However, there are Connes-amenable, dual Banach algebras that fail to have a normal, virtual diagonal (\cite{Runde06}; see also \cite{Runde15}).
\end{remark}
\par 
Following again \cite{EffrosRuan00}, we write $\tensor^\lambda$ for the (completed) injective tensor product of Banach spaces. Given a Banach algebra $\A$, a left Banach $\A$-module $E$, and a right Banach $\A$-module $F$, we turn $E^\ast \tensor^\lambda F^\ast$ into a Banach $\A$-bimodule via
\[
  a \cdot (\psi \tensor \phi) := \psi \tensor a \cdot \phi \quad\text{and}\quad (\psi \tensor \phi) \cdot a := \psi \cdot a \tensor \phi \qquad (a \in \A, \, \phi \in E^\ast, \, \psi \in F^\ast).
\]
For this $\A$-bimodule action, the canonical isometric embedding of $E^\ast \tensor^\lambda F^\ast$ into $(E \tensor^\gamma F)^\ast$ becomes an $\A$-bimodule homomorphism; we can thus say that $E^\ast \tensor^\lambda F^\ast$ ``is'' a closed submodule of $(E \tensor^\gamma F)^\ast$.
\par 
We shall require the following lemma in the next section:
\begin{lemma} \label{Cswlem}
Let $(\A,\A_\ast)$ be a dual Banach algebra, let $E$ be a left Banach $\A$-module, and let $F$ be a right Banach $\A$-module. Suppose that $X$ is a closed right submodule of $E^\ast$ contained in $\RCsw(E^\ast)$ and that $Y$ is a closed left submodule of $F^\ast$ contained in $\LCsw(F^\ast)$. Then $X \tensor^\lambda Y$ is a closed sub-bimodule of $(E \tensor^\gamma F)^\ast$ contained in $\Csw((E \tensor^\gamma F)^\ast)$.
\end{lemma}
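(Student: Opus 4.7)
My plan is to reduce the containment $X \tensor^\lambda Y \subset \Csw((E \tensor^\gamma F)^\ast)$ to the case of an elementary tensor $\phi \tensor \psi$ with $\phi \in X$ and $\psi \in Y$, and then to obtain this case by a factorization argument that combines the continuity hypotheses on $X$ and $Y$ with an adjoint description of the ``tensor with a fixed vector'' map. The sub-bimodule property of $X \tensor^\lambda Y$ inside $(E \tensor^\gamma F)^\ast$ is immediate from the explicit form of the bimodule action on $E^\ast \tensor^\lambda F^\ast$, together with the fact that $X$ is a right submodule of $E^\ast$ and $Y$ is a left submodule of $F^\ast$; closedness in $(E \tensor^\gamma F)^\ast$ follows because the injective tensor product preserves isometric embeddings, so $X \tensor^\lambda Y$ sits isometrically in $E^\ast \tensor^\lambda F^\ast$, and the canonical map $E^\ast \tensor^\lambda F^\ast \hookrightarrow (E \tensor^\gamma F)^\ast$ is itself an isometry.

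For the main content, I would first observe that $\Csw((E \tensor^\gamma F)^\ast) = \LCsw \cap \RCsw$ is norm-closed in $(E \tensor^\gamma F)^\ast$ by Proposition \ref{LCswprop}(i) and its right-module analog. It therefore suffices to verify that each elementary tensor $\phi \tensor \psi$, with $\phi \in X$ and $\psi \in Y$, lies in $\Csw((E \tensor^\gamma F)^\ast)$; arbitrary finite sums then follow by linearity, and the general case by passing to the norm closure. To establish the right $\sigma$-weak continuity of $\phi \tensor \psi$, I would factor the map $a \mapsto (\phi \tensor \psi) \cdot a = (\phi \cdot a) \tensor \psi$ as the composition of $a \mapsto \phi \cdot a$, which is $\sigma(\A,\A_\ast)$-to-weak$^\ast$ continuous because $\phi \in \RCsw(E^\ast)$, with the map $\eta \mapsto \eta \tensor \psi$ from $E^\ast$ into $(E \tensor^\gamma F)^\ast$. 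The crux is that this second map is the Banach-space adjoint of the bounded operator $T_\psi \!: E \tensor^\gamma F \to E$ determined on elementary tensors by $x \tensor y \mapsto \psi(y)\, x$, and is therefore automatically weak$^\ast$-weak$^\ast$ continuous.

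A symmetric argument, using $\psi \in \LCsw(F^\ast)$ and the adjoint of $S_\phi \!: E \tensor^\gamma F \to F$ given by $x \tensor y \mapsto \phi(x)\, y$, gives the left $\sigma$-weak continuity of $\phi \tensor \psi$. Hence $\phi \tensor \psi \in \Csw((E \tensor^\gamma F)^\ast)$, and the reduction in the first paragraph completes the proof. I do not anticipate any essential obstacle; the only mildly non-routine step is the recognition that the ``tensor with $\psi$'' map arises as a predual adjoint, which is precisely what bridges the weak$^\ast$ topology of $E^\ast$ and the much coarser weak$^\ast$ topology of the larger space $(E \tensor^\gamma F)^\ast$.
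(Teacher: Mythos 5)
Your overall strategy is the paper's own: reduce to elementary tensors, realize the action $a \mapsto (\phi \tensor \psi)\cdot a$ as the composition of $a \mapsto \phi\cdot a$ with the ``tensor with $\psi$'' map, and finish by linearity and the closedness of $\Csw$ from Proposition \ref{LCswprop}(i); the sub-bimodule and closedness parts are also fine. However, there is a genuine (though easily repaired) topological slip in the key step. By definition, $\phi \in \RCsw(E^\ast)$ means that $a \mapsto \phi\cdot a$ is continuous from $\sigma(\A,\A_\ast)$ into the \emph{weak} topology $\sigma(E^\ast, E^{\ast\ast})$ --- not the weak$^\ast$ topology $\sigma(E^\ast,E)$ --- and likewise membership of $\phi\tensor\psi$ in $\RCsw((E\tensor^\gamma F)^\ast)$ requires continuity into $\sigma((E\tensor^\gamma F)^\ast, (E\tensor^\gamma F)^{\ast\ast})$. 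Your argument routes everything through the weak$^\ast$ topologies: you quote the hypothesis as giving $\sigma(\A,\A_\ast)$-to-weak$^\ast$ continuity (true, but weaker than what you actually have), and you justify the continuity of $\eta\mapsto\eta\tensor\psi = T_\psi^\ast\eta$ by its being an adjoint, hence weak$^\ast$-weak$^\ast$ continuous. The composite you obtain is therefore only $\sigma(\A,\A_\ast)$-to-$\sigma((E\tensor^\gamma F)^\ast, E\tensor^\gamma F)$ continuous, which is strictly coarser than what $\RCsw$ demands, so the final ``hence $\phi\tensor\psi\in\Csw((E\tensor^\gamma F)^\ast)$'' does not follow as written.

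The repair is one line, and it makes the adjoint description unnecessary: $T_\psi^\ast$ is a bounded linear map between Banach spaces and is therefore automatically \emph{weak-weak} continuous, so composing it with the genuinely weakly continuous map $a\mapsto\phi\cdot a$ lands you in the correct topology. This is exactly what the paper packages as Proposition \ref{LCswprop}(ii): the map $\eta\mapsto\eta\tensor\psi$ (resp.\ $\psi\mapsto\phi\tensor\psi$ in the paper's symmetric version) is a bounded homomorphism of right (resp.\ left) Banach $\A$-modules --- which is precisely your factorization identity $(\phi\tensor\psi)\cdot a = (\phi\cdot a)\tensor\psi$ --- and bounded module homomorphisms carry $\RCsw$ into $\RCsw$. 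With that substitution your proof coincides with the paper's.
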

\begin{proof}
It is straightforward from the definition of the bimodule action of $\A$ on $E^\ast \tensor^\lambda F^\ast$ that $X \tensor^\lambda Y$ is a closed submodule of $(E \tensor^\gamma F)^\ast$.
\par 
To see that $X \tensor^\lambda Y \subset \Csw((E \tensor^\gamma F)^\ast)$, fix $\phi \in X$, and define
\[
  \rho_\phi \!: F^\ast \to X \tensor^\lambda F^\ast, \quad \psi \mapsto \phi \tensor \psi.
\]
Then $\rho_\phi$ is a homomorphism of left Banach $\A$-modules, so that 
\[
  \rho_\phi(Y) \subset \rho_\phi(\LCsw(F^\ast)) \subset \LCsw(X \tensor^\lambda F^\ast) \subset \LCsw((E \tensor^\gamma F)^\ast)
\]
and, in particular, $\{ \phi \tensor \psi : \psi \in Y \} \subset \LCsw((E \tensor^\gamma F)^\ast)$. As $\phi \in X$ was arbitrary, it follows by linearity and continuity that $X \tensor^\lambda Y \subset \LCsw((E \tensor^\gamma F)^\ast)$.
\par
Analogously, we obtain that $X \tensor^\lambda Y \subset \RCsw((E \tensor^\gamma F)^\ast)$, so that $X \tensor^\lambda Y \subset \Csw((E \tensor^\gamma F)^\ast)$ as claimed.
\end{proof}
\par 
We note the following immediate consequence of Lemma \ref{Cswlem} (which is what we will actually need below):
\begin{corollary} \label{Cswcor}
Let $(\A, \A_\ast)$ be a dual Banach algebra. Then $\A_\ast \tensor^\lambda \A_\ast$ is a closed submodule of $(\A \tensor^\gamma \A)^\ast$ contained in $\Csw((\A \tensor^\gamma \A)^\ast)$. 
\end{corollary}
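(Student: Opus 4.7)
The plan is simply to specialize Lemma~\ref{Cswlem} to the case $E = F = \A$ with $X = Y = \A_\ast$. Concretely, I view $\A$ both as a left Banach $\A$-module (via left multiplication) and as a right Banach $\A$-module (via right multiplication), so that the dual Banach $\A$-bimodule structure on $\A^\ast$ in each case is the standard one.

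To invoke the lemma I need to check two things. First, $\A_\ast$ is a closed right submodule of $\A^\ast$ for the ``right'' copy and a closed left submodule of $\A^\ast$ for the ``left'' copy. This is immediate from Definition~\ref{dualdef}(b): the separate $\sigma(\A,\A_\ast)$-continuity of the multiplication in $\A$ is exactly equivalent to $\A_\ast$ being invariant under both the left and right dual actions of $\A$ on $\A^\ast$, and $\A_\ast$ is norm-closed in $\A^\ast$ by definition. Second, I need the containments $\A_\ast \subset \RCsw(\A^\ast)$ and $\A_\ast \subset \LCsw(\A^\ast)$; but both are contained in the remark noted earlier that $\A_\ast \subset \Csw(\A^\ast) = \LCsw(\A^\ast) \cap \RCsw(\A^\ast)$ for every dual Banach algebra.

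With these verifications, Lemma~\ref{Cswlem} applies verbatim and yields that $\A_\ast \tensor^\lambda \A_\ast$ is a closed sub-bimodule of $(\A \tensor^\gamma \A)^\ast$ contained in $\Csw((\A \tensor^\gamma \A)^\ast)$, which is exactly the statement of the corollary. There is no real obstacle: the only thing to keep straight is which copy of $\A$ is being regarded as a left module and which as a right module, so that the two halves of the hypothesis of Lemma~\ref{Cswlem} are matched with the correct sides of $\A_\ast \subset \Csw(\A^\ast)$. Since nothing more than bookkeeping is required, the proof will be a one-line appeal to the lemma.
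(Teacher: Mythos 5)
Your proposal is correct and is exactly the paper's intended argument: the corollary is stated as an immediate consequence of Lemma~\ref{Cswlem} with $E = F = \A$ and $X = Y = \A_\ast$, using the earlier remark that $\A_\ast \subset \Csw(\A^\ast)$ and the fact (from separate weak$^\ast$ continuity of multiplication) that $\A_\ast$ is a closed submodule of $\A^\ast$ on both sides. Nothing to add.
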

\section{Connes-amenability of $B(G)$}
We now turn to characterizing those locally compact groups $G$ for which the Fourier--Stieltjes algebra $B(G)$ is Connes amenable. We call $G$ \emph{almost abelian} if it has an abelian subgroup of finite index (other terms to describe this class of groups are ``virtually abelian'' or ``finite-by-abelian''). We shall prove that $B(G)$ is Connes-amenable if and only if $G$ is almost abelian. The method of proof resembles in some ways the one used in \cite{ForrestRunde05} and \cite{Runde08}: from the existence of a $\Csw$-virtual diagonal for $B(G)$, we conclude that a certain map is completely bounded, which is possible only if $G$ is almost abelian. However, in comparison with \cite{ForrestRunde05} and \cite{Runde08}, there are considerable technical difficulties to overcome. 
\par
Let $G$ be a locally compact group, and let $\tensor_{\max}$ stand for the maximal tensor product of $\cstar$-algebras (see \cite{Takesaki02}). Then $\cstar(G) \tensor_{\max} \cstar(G)$ is the enveloping $\cstar$-algebra of the Banach $^\ast$-algebra $L^1(G) \tensor^\gamma L^1(G)$. As $L^1(G) \tensor^\gamma L^1(G) \cong L^1(G \times G)$ isometrically isomorphic as Banach $^\ast$-algebras, this means that $\cstar(G) \tensor_{\max} \cstar(G) \cong \cstar(G \times G)$ canonically as $\cstar$-algebras. Since the identity on the algebraic tensor product $\cstar(G) \tensor \cstar(G)$ extends to a contraction from $\cstar(G) \tensor_{\max} \cstar(G)$ into $\cstar(G) \tensor^\lambda \cstar(G)$, we have---by virtue of Corollary \ref{Cswcor}---a canonical map $\theta \!: \cstar(G \tensor G) \to \Csw((B(G) \tensor^\gamma B(G))^\ast)$. Both $\cstar(G \times G)$ and $\Csw((B(G) \tensor^\gamma B(G))^\ast)$ are Banach $B(G)$-bimodules in a canonical way, and it is clear by construction that $\theta$ is a $B(G)$-bimodule homomorphism.
\par
We summarize:
\begin{lemma} \label{BGlem1}
Let $G$ be a locally compact group. Then there is a canonical contractive $B(G)$-bimodule homomorphism $\theta \!: \cstar(G \times G) \to \Csw((B(G) \tensor^\gamma B(G))^\ast)$.
\end{lemma}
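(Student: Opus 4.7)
The plan is to construct $\theta$ explicitly as the composition of three natural maps, as sketched in the paragraph preceding the lemma, and then verify it intertwines the canonical bimodule structures.

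First, identify $\cstar(G \times G) \cong \cstar(G) \tensor_{\max} \cstar(G)$ as $\cstar$-algebras. Since $L^1(G) \tensor^\gamma L^1(G) \cong L^1(G \times G)$ isometrically as Banach $^\ast$-algebras, and since $\cstar(G) \tensor_{\max} \cstar(G)$ is by definition the enveloping $\cstar$-algebra of $L^1(G) \tensor^\gamma L^1(G)$ while $\cstar(G \times G)$ is the enveloping $\cstar$-algebra of $L^1(G \times G)$, the universal property of enveloping $\cstar$-algebras yields the claimed $\cstar$-isomorphism.

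Next, invoke the standard $\cstar$-algebra fact that on the algebraic tensor product of two $\cstar$-algebras the maximal $\cstar$-tensor norm dominates the injective Banach cross-norm; this produces a contraction $\pi \!: \cstar(G) \tensor_{\max} \cstar(G) \to \cstar(G) \tensor^\lambda \cstar(G)$ extending the identity on $\cstar(G) \tensor \cstar(G)$. Then apply Corollary \ref{Cswcor} with $\A = B(G)$ (so that $\A_\ast = \cstar(G)$) to obtain a canonical isometric $B(G)$-bimodule embedding $\iota \!: \cstar(G) \tensor^\lambda \cstar(G) \hookrightarrow \Csw((B(G) \tensor^\gamma B(G))^\ast)$. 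Define $\theta := \iota \circ \pi$, pre-composed with the identification of the first step. It is contractive by construction.

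The final task is to verify that $\theta$ respects the canonical $B(G)$-bimodule structures on both sides. The structure on $\cstar(G) \tensor_{\max} \cstar(G)$ is the one induced factor-wise from the $B(G)$-bimodule structure on its predual $\cstar(G)$, consistent with the convention fixed just before Lemma \ref{Cswlem}, and the structure on the injective side is the one spelled out there. On elementary tensors $\phi \tensor \psi$ with $\phi, \psi \in \cstar(G)$ and $a \in B(G)$, both $\pi$ and $\iota$ act factor-wise by construction, so the bimodule compatibility is immediate on the algebraic tensor product; density of the algebraic tensor product together with norm continuity of the module actions then extends it to all of $\cstar(G \times G)$. I expect this final bimodule-compatibility check to be the only step that calls for any real care, the other three being formal consequences of universal properties and standard tensor-norm comparisons.
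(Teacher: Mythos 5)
Your proposal is correct and follows essentially the same route as the paper: the identification $\cstar(G\times G)\cong \cstar(G)\tensor_{\max}\cstar(G)$ via enveloping $\cstar$-algebras of $L^1(G\times G)\cong L^1(G)\tensor^\gamma L^1(G)$, the contraction onto $\cstar(G)\tensor^\lambda\cstar(G)$, and then Corollary \ref{Cswcor}. The paper simply asserts that the bimodule property is ``clear by construction''; your explicit check on elementary tensors followed by density is exactly the right way to make that assertion precise.
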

\par
Let $G$ be a locally compact group, and let $\wstar(G) := C^\ast(G)^{\ast\ast}$. There is a canonical weak$^\ast$ continuous unitary representation $\omega \!: G \to \wstar(G)$, the \emph{universal representation} of $G$,
with the following universal property: for any (WOT continuous) unitary representation $\pi$ of $G$ on a Hilbert space, there is unique normal $^\ast$-homomorphism $\rho \!: \wstar(G) \to \pi(G)''$ such that $\pi = \rho \circ \omega$. Let $G_d$ stand for $G$ equipped with the discrete topology, and let $\omega_d$ denote the universal representation of $G_d$. Then, by the foregoing, there is a unique normal $^\ast$-homomorphism $\rho \!: W^\ast(G_d) \to W^\ast(G)$ such that $\omega = \rho \circ \omega_d$. (The pre-adjoint of $\rho$ is the canonical inclusion $\iota \!: B(G) \to B(G_d)$; see \cite{Eymard64}.)
\par
We now turn to the situation of Lemma \ref{BGlem1}. Taking the second adjoint of $\theta$, we obtain a weak$^\ast$-weak$^\ast$ continuous $B(G)$-bimodule homomorphism $\theta^{\ast\ast} \!: W^\ast(G \times G) \to  \Csw((B(G) \tensor^\gamma B(G))^\ast)^{\ast\ast}$. We note that that $(G \times G)_d = G_d \times G_d$; for simplicity, we write $\omega$ and $\omega_d$ for the universal representations of $G \times G$ and $G_d \times G_d$, respectively. By the foregoing, there is a unique normal $^\ast$-homomorphism $\rho \!: W^\ast(G_d \times G_d) \to W^\ast(G \times G)$ such that $\omega = \rho \circ \omega_d$, which is immediately seen to be a $B(G)$-bimodule homomorphism. Consequently, $\rho \circ \theta^{\ast\ast} \!: W^\ast(G_d \times G_d) \to \Csw((B(G) \tensor^\gamma B(G))^\ast)^{\ast\ast}$ is a weak$^\ast$-weak$^\ast$ continuous $B(G)$-bimodule homomorphism, the pre-adjoint $\Theta \!: \Csw((B(G) \tensor^\gamma B(G))^\ast)^\ast \to B(G_d \times G_d)$ of which is also a $B(G)$-bimodule homomorphism.
\par
All in all, we have:
\begin{lemma} \label{BGlem2}
Let $G$ be a locally compact group. Then there is a canonical contractive $B(G)$-bimodule homomorphism $\Theta \!: \Csw((B(G) \tensor^\gamma B(G))^\ast)^\ast \to B(G_d \times G_d)$ such that
\begin{equation} \label{eq1}
  \langle \boldsymbol{X}, \theta^{\ast\ast}(\omega(x,y)) \rangle = \Theta(\boldsymbol{X})(x,y) \qquad (\boldsymbol{X} \in \Csw((B(G) \tensor^\gamma B(G))^\ast)^\ast, \, x,y \in G).
\end{equation}
\end{lemma}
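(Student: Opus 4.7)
The construction of $\Theta$ is essentially given in the paragraph preceding the statement, so my plan is to verify that the indicated composition has the required properties. Specifically, I would define $\Theta$ as the pre-adjoint of the weak$^\ast$-weak$^\ast$ continuous map $\theta^{\ast\ast} \circ \rho \!: \wstar(G_d \times G_d) \to \Csw((B(G) \tensor^\gamma B(G))^\ast)^{\ast\ast}$; such a pre-adjoint exists because both $\theta^{\ast\ast}$ and the normal $^\ast$-homomorphism $\rho$ are weak$^\ast$-weak$^\ast$ continuous. Since the pre-adjoint of $\rho$ is the canonical inclusion $\iota \!: B(G \times G) \to B(G_d \times G_d)$ (the product analogue of the fact cited earlier in the section) and the pre-adjoint of $\theta^{\ast\ast}$ is $\theta^\ast$, we obtain the concrete description $\Theta = \iota \circ \theta^\ast$.

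Next, I would check the three required properties. Contractivity is immediate because $\|\theta\| \le 1$ by Lemma \ref{BGlem1} and $\iota$ is isometric. The bimodule homomorphism property is inherited step-by-step: $\theta^\ast$ is a $B(G)$-bimodule homomorphism because $\theta$ is one by Lemma \ref{BGlem1} and the bimodule actions on the relevant dual spaces are the canonical dual actions; and $\iota$ is a $B(G)$-bimodule homomorphism because $\rho$ is, as noted in the paragraph preceding the statement. The composition of bimodule homomorphisms is again such, so $\Theta$ has the claimed property.

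Finally, identity (\ref{eq1}) follows by chasing definitions. Using $\omega = \rho \circ \omega_d$ together with the pre-adjoint relation,
\[
  \langle \boldsymbol{X}, \theta^{\ast\ast}(\omega(x,y)) \rangle = \langle \boldsymbol{X}, (\theta^{\ast\ast} \circ \rho)(\omega_d(x,y)) \rangle = \langle \Theta(\boldsymbol{X}), \omega_d(x,y) \rangle,
\]
and the right-hand pairing evaluates a function in $B(G_d \times G_d) = \cstar(G_d \times G_d)^\ast$ against $\omega_d(x,y) \in \wstar(G_d \times G_d)$, which returns its value at $(x,y)$. I do not anticipate a serious obstacle: the proof is a chase through definitions and the functoriality of adjoints and pre-adjoints. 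The only delicate point is keeping track of the various dualities and of the $B(G)$-bimodule compatibility, in particular the (already asserted) fact that $\rho$ is a $B(G)$-bimodule map, which amounts to defining the $B(G)$-action on $\wstar(G_d \times G_d)$ so that its pre-adjoint is the standard $B(G)$-module structure on $B(G_d \times G_d)$ arising from the inclusion $B(G) \hookrightarrow B(G_d)$.
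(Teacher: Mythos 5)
Your proof is correct and follows essentially the same route as the paper, which defines $\Theta$ as the pre-adjoint of the composition of $\theta^{\ast\ast}$ with $\rho$ (written there, by an evident typo, in the order $\rho \circ \theta^{\ast\ast}$) and simply declares (\ref{eq1}) ``clear from the construction.'' Your explicit identification $\Theta = \iota \circ \theta^\ast$ and the verification of (\ref{eq1}) via $\omega = \rho \circ \omega_d$ and the pairing $\langle f, \omega_d(x,y)\rangle = f(x,y)$ supply exactly the details the paper leaves to the reader.
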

\par 
The equality (\ref{eq1}) in Lemma \ref{BGlem2} is clear from the construction of $\Theta$.
\begin{proposition} \label{BGprop}
Let $G$ be a locally compact group, let $\boldsymbol{D} \in \Csw((B(G) \tensor^\gamma B(G))^\ast)^\ast$ be a $\Csw$-virtual diagonal for $B(G)$, and let $\Theta \!: \Csw((B(G) \tensor^\gamma B(G))^\ast)^\ast \to B(G_d \times G_d)$ be as in Lemma \emph{\ref{BGlem3}}. Then $\Theta(\boldsymbol{D}) \in B(G_d \times G_d)$ is the indicator function of the diagonal subgroup of $G \times G$, i.e.,
\[
  G_\Delta := \{ (x,x) : x \in G \}.
\]
\end{proposition}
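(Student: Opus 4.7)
The verification of $\Theta(\boldsymbol{D}) = \boldsymbol{1}_{G_\Delta}$ splits naturally into two parts---vanishing off the diagonal and taking the value $1$ on it---each using that $\Theta$ is a $B(G)$-bimodule homomorphism together with one of the two defining properties of a $\Csw$-virtual diagonal.

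For the off-diagonal part, I would first track the $B(G)$-bimodule structure through the construction of $\Theta$ (via $\cstar(G \times G) \cong \cstar(G) \tensor_{\max} \cstar(G)$ and the Fubini map) to identify the induced bimodule action on $B(G_d \times G_d)$ as pointwise multiplication in the separate variables: $(a \cdot u)(x,y) = a(x) u(x,y)$ and $(u \cdot a)(x,y) = u(x,y) a(y)$. Applying the bimodule homomorphism $\Theta$ to the centrality relation $a \cdot \boldsymbol{D} = \boldsymbol{D} \cdot a$ then yields $(a(x) - a(y))\,\Theta(\boldsymbol{D})(x,y) = 0$ for all $a \in B(G)$ and all $(x,y) \in G \times G$. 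Since coefficient functions of unitary representations separate the points of $G$ (Gelfand--Raikov), for each $x \neq y$ one can choose $a \in B(G)$ with $a(x) \neq a(y)$, forcing $\Theta(\boldsymbol{D})(x,y) = 0$ whenever $(x,y) \notin G_\Delta$.

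For the diagonal value, the key observation is that the multiplication on $B(G)$ factors as $\Delta = \phi \circ M$, where $M \!: B(G) \tensor^\gamma B(G) \to B(G \times G)$, $u \tensor v \mapsto u \boxtimes v$, is the Fubini product and $\phi \!: B(G \times G) \to B(G)$, $w \mapsto w|_{G_\Delta}$, is the diagonal restriction. Dualizing gives $\Delta^\ast = M^\ast \circ \phi^\ast$; by construction, $M^\ast|_{\cstar(G \times G)}$ coincides with the map $\theta$ from Lemma \ref{BGlem1}, and a direct computation shows $\phi^\ast(\omega_G(x)) = \omega(x,x)$. Passing to biadjoints and using the weak$^\ast$-weak$^\ast$ continuity of $(\Delta_\sigma^w)^\ast = (\Delta^\ast|_{\cstar(G)})^{\ast\ast}$ and of $\theta^{\ast\ast} \circ \phi^\ast$ then produces the identity $(\Delta_\sigma^w)^\ast(\omega_G(x)) = \theta^{\ast\ast}(\omega(x,x))$ as elements of $\Csw((B(G) \tensor^\gamma B(G))^\ast)^{\ast\ast}$. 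Pairing this with $\boldsymbol{D}$, using (\ref{eq1}), and noting that $\Delta_\sigma^w \boldsymbol{D} = 1_{B(G)}$ (which follows from $a\,\Delta_\sigma^w \boldsymbol{D} = a$ applied to the constant function $1$), one obtains
\[
  \Theta(\boldsymbol{D})(x,x) = \langle \boldsymbol{D}, (\Delta_\sigma^w)^\ast \omega_G(x)\rangle = (\Delta_\sigma^w \boldsymbol{D})(x) = 1 \qquad (x \in G).
\]

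The main technical obstacle is establishing the identity in $\Csw((B(G) \tensor^\gamma B(G))^\ast)^{\ast\ast}$ above: although $\Delta^\ast = M^\ast \circ \phi^\ast$ holds on $B(G)^\ast$, the map $\phi^\ast$ does not in general send $\cstar(G)$ into $\cstar(G \times G)$ when $G$ is non-discrete, so the factorization does not restrict cleanly to the preduals. Overcoming this requires approximating $\omega_G(x)$ weak$^\ast$ by $L^1$-functions, checking by direct computation that both biadjoints agree as functionals on the image of $B(G) \tensor^\gamma B(G)$ inside $\Csw((B(G) \tensor^\gamma B(G))^\ast)^\ast$ (each sends $u \tensor v$ to $u(x) v(x)$), and extending via a weak$^\ast$-density argument together with the weak$^\ast$-weak$^\ast$ continuity of the two biadjoints.
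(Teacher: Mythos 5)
Write $\mathcal{X} := \Csw((B(G) \tensor^\gamma B(G))^\ast)$ for brevity. Your off-diagonal argument is correct and is a mild variant of the paper's: the paper picks $f,g \in B(G)$ with $f(x)=1=g(y)$ and $fg=0$ and computes $\Theta(\boldsymbol{D})(x,y) = \Theta(\boldsymbol{D}\cdot(fg))(x,y) = 0$, whereas you use centrality together with the fact that $B(G)$ separates points; both rest on the same module formula and on $f \cdot \boldsymbol{D} = \boldsymbol{D} \cdot f$, and both are fine. Your diagonal argument also aims at exactly what the paper uses: the identity $(\Delta_\sigma^w)^\ast(\omega_G(x)) = \theta^{\ast\ast}(\omega(x,x))$ is precisely the commutativity of the paper's diagram (\ref{eq2}), which the paper asserts in a single sentence. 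So you have correctly located the crux of the proof.

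The way you propose to establish that identity, however, does not work. The two elements $(\Delta_\sigma^w)^\ast(\omega_G(x))$ and $\theta^{\ast\ast}(\omega(x,x))$ of $\mathcal{X}^{\ast\ast}$ are not $\sigma(\mathcal{X}^\ast,\mathcal{X})$-continuous functionals on $\mathcal{X}^\ast$, so checking that they agree on the canonical image of $B(G) \tensor^\gamma B(G)$ in $\mathcal{X}^\ast$ --- which is only weak$^\ast$ dense, not norm dense --- does not yield equality; and the weak$^\ast$-weak$^\ast$ continuity of the two biadjoints is continuity in the \emph{argument} $\omega_G(x)$, which merely reduces the problem to $\mu \in L^1(G)$, where the same difficulty recurs: one would need $\theta(\nu_j) \to \Delta^\ast\mu$ against \emph{every} $\boldsymbol{X} \in \mathcal{X}^\ast$, not just against elementary tensors. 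This is a genuine obstruction rather than a technicality. For non-discrete $G$, say $G = \reals$, the element $\Delta^\ast\mu$ does not lie in the norm closure of $\theta(\cstar(G \times G))$ inside $(B(G) \tensor^\gamma B(G))^\ast$: evaluating bilinear forms at pairs of characters $(\chi_\xi,\chi_\eta)$ is contractive into $\ell^\infty$, and it sends $\theta(F)$ to a function vanishing at infinity on $\widehat{G} \times \widehat{G}$ but sends $\Delta^\ast\mu$ to $\hat\mu(\xi+\eta)$, which is constant on antidiagonals; hence the distance is at least $\|\hat\mu\|_\infty$. Hahn--Banach then produces $\boldsymbol{X} \in \mathcal{X}^\ast$ annihilating $\theta(\cstar(G\times G))$ with $\Delta_\sigma^w\boldsymbol{X} \neq 0$, and on such an $\boldsymbol{X}$ the two sides of your identity disagree. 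So the identity cannot be proved by a density argument valid for all of $\mathcal{X}^\ast$; any repair must exploit the specific functional $\boldsymbol{D}$ (its centrality and normalization), which your argument never uses at this step.
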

\begin{proof}
Consider the diagram
\begin{equation} \label{eq2}
  \begin{diagram}[midshaft]
  \Csw((B(G) \tensor^\gamma B(G))^\ast)^\ast & \rTo^\Theta & B(G_d \times G_d) \\
  \dTo^{\Delta_\sigma^w}                     &             & \dTo_{f \mapsto f|_{G_\Delta}} \\
  B(G)                                       & \rTo^\iota  & B(G_d) .
  \end{diagram}
\end{equation}
From (\ref{eq1}), we infer that (\ref{eq2}) commutes. As $\Delta_\sigma^w \boldsymbol{D} = 1$, we conclude that $\Theta(\boldsymbol{D}) |_{G_\Delta} = 1$.
\par
Let $(x,y) \in ( G \times G ) \setminus G_\Delta$, i.e., $x \neq y$. Choose $f,g \in B(G)$ such that $f(x) = 1 = g(y)$ and $fg = 0$. Then we have
\[
  \Theta(\boldsymbol{D})(x,y) = (f \cdot \Theta(\boldsymbol{D}) \cdot g)(x,y) = \Theta(f \cdot \boldsymbol{D} \cdot g)(x,y)= \Theta(\boldsymbol{D} \cdot (fg))(x,y) = 0.
\]
It follows that $\Theta(\boldsymbol{D})$ vanishes off $G_\Delta$. This completes the proof.
\end{proof}
\par 
Let $X$ be a set, and let $\alpha \!: X \to X$ be a map. Define
\[
  \alpha_\ast \!: \comps^X \to \comps^X, \quad f \mapsto f \circ \alpha.
\]
We define:
\begin{definition} \label{admdef}
Let $G$ be a locally compact group, and let $\alpha \!: G \to G$ be a map. We say that $\alpha$ is \emph{$\Csw$-admissible} if:
\begin{alphitems}
\item $\alpha_\ast(B(G)) \subset B(G)$;
\item $(\id \tensor \alpha_\ast)^\ast(\Csw((B(G) \tensor^\gamma B(G))^\ast)) \subset \Csw((B(G) \tensor^\gamma B(G))^\ast)$.
\end{alphitems}
\end{definition}
\begin{remark}
The requirement that $\alpha_\ast(B(G)) \subset B(G)$ forces $\alpha$ to be continuous. Also, $\alpha_\ast$ is an algebra homomorphism and thus automatically continuous by the classical Gelfand--Rickart theorem. Hence, (b) makes sense.
\end{remark}
\begin{example}
Let $G$ be a locally compact group. For $f \in B(G)$, define $\check{f} \in B(G)$ through $\check{f}(x) := f(x^{-1})$ for $x \in G$. Define
\[
  \mbox{\ }^\vee \!: B(G) \to B(G), \quad f \mapsto \check{f}.
\]
This means that $\mbox{\ }^\vee = \alpha_\ast$ where $\alpha(x) = x^{-1}$ for $x \in G$. It is well known that $\mbox{\ }^\vee$ is well defined, i.e., leaves $B(G)$ invariant, and is an isometry (\cite{Eymard64}). We claim that $\mbox{\ }^\vee$ is weak$^\ast$-weak$^\ast$ continuous. To see this, let $B$ denote the closed unit ball of $B(G)$ and note that $\mbox{\ }^\vee$ leaves $B$ invariant. As $L^1(G)$ is norm dense in $\cstar(G)$, the weak$^\ast$ topologies $\sigma(B(G),\cstar(G))$ and $\sigma(L^\infty(G),L^1(G))$ conincide on $B$. As $\mbox{\ }^\vee$ is evidently $\sigma(L^\infty(G),L^1(G))$-$\sigma(L^\infty(G),L^1(G))$ continuous on $B(G)$, we obtain that $\mbox{\ }^\vee$ is $\sigma(B(G),\cstar(G))$-$\sigma(B(G),\cstar(G))$ continuous on $B$, i.e., for each $x \in \cstar(G)$, the functional $B(G) \ni f \mapsto \langle x, \check{f} \rangle$ is $\sigma(B(G),\cstar(G))$ continuous on $B$. As consequence of the Krein-\v{S}mulian theorem (\cite[2.5.11.\ Corollary]{Pedersen88}), each such functional is continuous on \emph{all} of $B(G)$, which means that $\mbox{\ }^\vee$ is weak$^\ast$-weak$^\ast$ continuous as claimed.
\par 
Let $\Phi \in \Csw((B(G) \tensor^\gamma B(G))^\ast$, $f \in B(G)$, and $\boldsymbol{x} \in B(G) \tensor^\gamma B(G)$. We then have
\begin{equation} \label{eq3}
  \langle \boldsymbol{x}, (\id \tensor \mbox{\ }^\vee)^\ast(\Phi) \cdot f \rangle = \langle (\id \tensor \mbox{\ }^\vee)(f\cdot \boldsymbol{x} ), \Phi \rangle
  = \langle f \cdot (\id \tensor \mbox{\ }^\vee)(\boldsymbol{x}), \Phi \rangle = \langle \boldsymbol{x}, (\id \tensor \mbox{\ }^\vee)^\ast(\Phi \cdot f) \rangle
\end{equation}
and
\begin{equation} \label{eq4}
  \langle \boldsymbol{x}, f \cdot (\id \tensor \mbox{\ }^\vee)^\ast(\Phi) \rangle = \langle (\id \tensor \mbox{\ }^\vee)(\boldsymbol{x} \cdot f), \Phi \rangle
  = \langle (\id \tensor \mbox{\ }^\vee)(\boldsymbol{x}) \cdot \check{f}, \Phi \rangle = \langle \boldsymbol{x}, (\id \tensor \mbox{\ }^\vee)^\ast(\check{f} \cdot \Phi) \rangle.
\end{equation}
From (\ref{eq3}), it is clear that 
\[
  (\id \tensor \mbox{\ }^\vee)^\ast(\RCsw((B(G) \tensor^\gamma B(G))^\ast)) \subset \RCsw((B(G) \tensor^\gamma B(G))^\ast),
\]
and since $\mbox{\ }^\vee$ is weak$^\ast$-weak$^\ast$ continuous on $B(G)$, it follows from (\ref{eq4}) that 
\[
  (\id \tensor \mbox{\ }^\vee)^\ast(\LCsw((B(G) \tensor^\gamma B(G))^\ast)) \subset \LCsw((B(G) \tensor^\gamma B(G))^\ast)
\]
as well. All in all, $(\id \tensor \mbox{\ }^\vee)^\ast(\Csw((B(G) \tensor^\gamma B(G))^\ast)) \subset \Csw((B(G) \tensor^\gamma B(G))^\ast)$ holds, so that $G \ni x \mapsto x^{-1}$ is $\Csw$-admissible.
\end{example}
\begin{lemma} \label{BGlem3}
Let $G$ be a locally compact group, let $\boldsymbol{D} \in \Csw((B(G) \tensor^\gamma B(G))^\ast)^\ast$ be a $\Csw$-virtual diagonal for $B(G)$, let $\Theta \!: \Csw((B(G) \tensor^\gamma B(G))^\ast)^\ast \to B(G_d \times G_d)$ be as in Lemma \emph{\ref{BGlem2}}, and let $\alpha \!: G \to G$ be $\Csw$-admissible. Then 
\[
  \Theta(((\id \tensor \alpha_\ast)^\ast|_{\Csw((B(G) \tensor^\gamma B(G))^\ast)} )^\ast)(\boldsymbol{D}) \in B(G_d \times G_d)
\]
is the indicator function of
\[
  \{ (x,y) : x,y \in G, \, \alpha(y) = x \}.
\]
\end{lemma}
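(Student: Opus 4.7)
The plan is to prove the intertwining identity
\[
  \Theta(T \boldsymbol{X})(x, y) = \Theta(\boldsymbol{X})(x, \alpha(y)) \qquad (\boldsymbol{X} \in \Csw((B(G) \tensor^\gamma B(G))^\ast)^\ast, \ x, y \in G),
\]
where I set $\tau := (\id \tensor \alpha_\ast)^\ast|_{\Csw((B(G) \tensor^\gamma B(G))^\ast)}$ and $T := \tau^\ast$, and then apply it to $\boldsymbol{D}$. Combined with Proposition \ref{BGprop}, which identifies $\Theta(\boldsymbol{D})$ with the indicator of $G_\Delta$, the identity immediately yields $\Theta(T\boldsymbol{D})(x, y) = 1$ exactly when $x = \alpha(y)$, which is the conclusion sought. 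The key observation is that, for fixed $(x, y)$, both sides of the identity are weak$^\ast$-continuous functionals of $\boldsymbol{X}$: by (\ref{eq1}) the right side equals $\langle \boldsymbol{X}, \theta^{\ast\ast}(\omega(x, \alpha(y)))\rangle$, while the left side equals $\langle \boldsymbol{X}, \tau^{\ast\ast}(\theta^{\ast\ast}(\omega(x, y)))\rangle$, and both are pairings against fixed elements of $\Csw((B(G) \tensor^\gamma B(G))^\ast)^{\ast\ast}$.

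It therefore suffices to verify the identity on a weak$^\ast$-dense subset of $\Csw((B(G) \tensor^\gamma B(G))^\ast)^\ast$. Let $q \!: (B(G) \tensor^\gamma B(G))^{\ast\ast} \to \Csw((B(G) \tensor^\gamma B(G))^\ast)^\ast$ denote the adjoint of the canonical inclusion $\Csw((B(G) \tensor^\gamma B(G))^\ast) \hookrightarrow (B(G) \tensor^\gamma B(G))^\ast$; it is weak$^\ast$-weak$^\ast$ continuous and surjective, so Goldstine's theorem implies that $q(B(G) \tensor^\gamma B(G))$ is weak$^\ast$-dense in $\Csw((B(G) \tensor^\gamma B(G))^\ast)^\ast$. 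By linearity and continuity it is enough to verify the identity on elements of the form $\boldsymbol{X} = q(f \tensor g)$ with $f, g \in B(G)$. There, a direct computation from the definition of $\tau^\ast$ gives $T(q(f \tensor g)) = q(f \tensor \alpha_\ast g)$; separately, unravelling $q$ together with the construction of $\theta$ as the composition
\[
  C^\ast(G \times G) \cong C^\ast(G) \tensor_{\max} C^\ast(G) \longrightarrow C^\ast(G) \tensor^\lambda C^\ast(G) \hookrightarrow (B(G) \tensor^\gamma B(G))^\ast,
\]
and a Goldstine approximation of $\omega(x, y) \in W^\ast(G \times G)$ by elements of $C^\ast(G \times G)$, yields $\Theta(q(f \tensor g))(x, y) = f(x) g(y)$. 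Consequently $\Theta(T(q(f \tensor g)))(x, y)$ and $\Theta(q(f \tensor g))(x, \alpha(y))$ both evaluate to $f(x) g(\alpha(y))$, establishing the identity on the dense set.

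The principal technical point is the evaluation $\Theta(q(f \tensor g))(x, y) = f(x) g(y)$: one must trace the pairing of $q(f \tensor g)$ with $\theta^{\ast\ast}(\omega(x, y))$ appearing in (\ref{eq1}) through the canonical embeddings and the factorization of $\theta$, so that it reduces, via weak$^\ast$-weak$^\ast$ continuity of $\theta^{\ast\ast}$, to the pairing of $f \tensor g \in B(G) \tensor B(G) \subset B(G \times G)$ against $\omega(x, y) \in W^\ast(G \times G) = B(G \times G)^\ast$, which is just $f(x) g(y)$. The remainder of the argument is purely formal.
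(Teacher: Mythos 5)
Your argument is correct in substance and arrives at exactly the identity that drives the paper's proof, namely $\Theta(T\boldsymbol{D})(x,y)=\Theta(\boldsymbol{D})(x,\alpha(y))$, followed by an appeal to Proposition \ref{BGprop}. The only difference is how that identity is verified. The paper does it in one line by asserting $\theta^{\ast\ast}(\omega(x,\alpha(y)))=(\id\tensor\alpha_\ast)^{\ast\ast}(\theta^{\ast\ast}(\omega(x,y)))$ and moving the adjoint across the pairing; you instead prove the identity for all $\boldsymbol{X}$ by checking it on the weak$^\ast$-dense set $q(B(G)\tensor^\gamma B(G))$, where both sides reduce to $f(x)g(\alpha(y))$. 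Your elementary-tensor computation (tracing $\theta$ through $\cstar(G)\tensor_{\max}\cstar(G)\to\cstar(G)\tensor^\lambda\cstar(G)\hookrightarrow(B(G)\tensor^\gamma B(G))^\ast$ and pairing $f\tensor g\in B(G\times G)$ against $\omega(x,y)$) is in effect a proof of the displayed equality that the paper leaves as ``clear from the construction'', at the cost of an extra density step. Both routes are legitimate.

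One justification needs repair, although the conclusion it supports is true. You claim both sides are weak$^\ast$ continuous in $\boldsymbol{X}$ because they are ``pairings against fixed elements of $\Csw((B(G)\tensor^\gamma B(G))^\ast)^{\ast\ast}$''; but pairing against an element of the \emph{bidual} is not in general weak$^\ast$ continuous on the dual --- only pairing against elements of $\Csw((B(G)\tensor^\gamma B(G))^\ast)$ itself is, and $\theta^{\ast\ast}(\omega(x,y))$ genuinely lives in the bidual when $G$ is non-discrete, since $\omega(x,y)\notin\cstar(G\times G)$. The weak$^\ast$ continuity you need follows instead from the facts that $\Theta$ is a pre-adjoint and $T=\tau^\ast$ is an adjoint, hence both are weak$^\ast$-weak$^\ast$ continuous, and that evaluation at $(x,y)$ is weak$^\ast$ continuous on $B(G_d\times G_d)$ because $\delta_{(x,y)}\in\ell^1(G_d\times G_d)\subset\cstar(G_d\times G_d)$. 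With that substitution your density argument goes through, and the proof is complete.
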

\begin{proof}
Let $x,y \in G$. By (\ref{eq1}), we have
\[
  \begin{split}
  \Theta(\boldsymbol{D})(x,\alpha(y)) & = \langle \boldsymbol{D}, \theta^{\ast\ast}(\omega(x,\alpha(y)) \rangle \rangle \\
  & = \langle \boldsymbol{D}, (\id \tensor \alpha_\ast)^{\ast\ast}(\theta^{\ast\ast}(\omega(x,y)) \rangle \\
  & = \langle (\id \tensor \alpha_\ast)^\ast(\boldsymbol{D}), (\theta^{\ast\ast}(\omega(x,y)) \rangle \\
  & = \Theta(((\id \tensor \alpha_\ast)^\ast|_{\Csw((B(G) \tensor^\gamma B(G))^\ast)} )^\ast)(\boldsymbol{D})(x,y) .
  \end{split}
\]
In view of Proposition \ref{BGprop}, this yields the claim.
\end{proof}
\par 
The following corollary of Lemma \ref{BGlem3} is crucial for the proof of our main result (Theorem \ref{BGthm} below). For the theory of operator spaces and, in particular, for the notion of a completely bounded map, we refer to the monograph \cite{EffrosRuan00}.
\begin{corollary} \label{BGcor1}
Let $G$ be a locally compact group, let $\boldsymbol{D} \in \Csw((B(G) \tensor^\gamma B(G))^\ast)^\ast$ be a $\Csw$-virtual diagonal for $B(G)$, let $\Theta \!: \Csw((B(G) \tensor^\gamma B(G))^\ast)^\ast \to B(G_d \times G_d)$ be as in Lemma \emph{\ref{BGlem2}}, and let $\alpha \!: G \to G$ be $\Csw$-admissible and bijective. Then $(\alpha^{-1})_\ast(B(G_d)) \subset B(G_d)$, and $(\alpha^{-1})_\ast \!: B(G_d) \to B(G_d)$ is completely bounded.
\end{corollary}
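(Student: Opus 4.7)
The plan is to use Lemma~\ref{BGlem3} to extract a ``graph indicator'' in $B(G_d \times G_d)$ from the $\Csw$-virtual diagonal, and then convert it into a completely bounded endomorphism of $\cstar(G_d)$ whose adjoint computes pre-composition with $\alpha^{-1}$.

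First, apply Lemma~\ref{BGlem3} to the bijective $\Csw$-admissible map $\alpha$: setting
\[
  \boldsymbol{E} := \bigl((\id \tensor \alpha_\ast)^\ast|_{\Csw((B(G) \tensor^\gamma B(G))^\ast)}\bigr)^\ast(\boldsymbol{D}),
\]
the element $u := \Theta(\boldsymbol{E}) \in B(G_d \times G_d)$ is the indicator function of $\{(x,y) : \alpha(y) = x\}$, which, by the bijectivity of $\alpha$, coincides with the graph $\Gamma_{\alpha^{-1}} := \{(x,\alpha^{-1}(x)) : x \in G\}$; in particular $\|u\|_{B(G_d \times G_d)} \leq \|\boldsymbol{D}\|$.

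Next, exploit the canonical identification $B(G_d \times G_d) = (\cstar(G_d) \tensor_{\max} \cstar(G_d))^\ast$ to curry $u$ into a completely bounded map
\[
  \hat{u} \!: \cstar(G_d) \to B(G_d), \qquad \langle \hat{u}(a), b \rangle := \langle u, a \tensor b \rangle,
\]
with $\|\hat{u}\|_{\cb} \leq \|u\|_{B(G_d \times G_d)}$. On the canonical unitary $\omega_d(x) \in \cstar(G_d)$, direct evaluation gives $\langle \hat{u}(\omega_d(x)), \omega_d(y) \rangle = u(x,y) = \delta_{y,\alpha^{-1}(x)}$, so $\hat{u}(\omega_d(x))$ is the point mass at $\alpha^{-1}(x)$, an element of the canonical copy of $\ell^1(G_d)$ that sits inside both $\cstar(G_d)$ and $B(G_d)$.

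The crucial step is to promote $\hat{u}$ to a completely bounded map $S \!: \cstar(G_d) \to \cstar(G_d)$ satisfying $S(\omega_d(x)) = \omega_d(\alpha^{-1}(x))$. On the dense $^\ast$-subalgebra $\lspan\{\omega_d(x) : x \in G\} \subset \cstar(G_d)$ the formula is forced, so the issue is to bound, for every matrix $[a_{ij}] \in M_n(\lspan\{\omega_d(x)\})$, the transported matrix via
\[
  \bigl\|[S(a_{ij})]\bigr\|_{M_n(\cstar(G_d))} \leq \|\hat{u}\|_{\cb} \bigl\|[a_{ij}]\bigr\|_{M_n(\cstar(G_d))},
\]
and then extend by density. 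This requires carefully threading the operator space identifications on the maximal tensor product together with the idempotency $u^2 = u$ and the special fact that $\hat{u}$'s image on the $\omega_d(x)$'s lies in the coincident copy of $\ell^1(G_d)$; this is the main obstacle, since $\alpha^{-1}$ need not be a group homomorphism and so no $^\ast$-homomorphism extension is available a priori. Once such $S$ is obtained, its adjoint $S^\ast \!: B(G_d) \to B(G_d)$ is automatically completely bounded with $\|S^\ast\|_{\cb} = \|S\|_{\cb}$, and
\[
  \langle \omega_d(x), S^\ast(v) \rangle = \langle S(\omega_d(x)), v \rangle = \langle \omega_d(\alpha^{-1}(x)), v \rangle = v(\alpha^{-1}(x)) \qquad (v \in B(G_d),\ x \in G),
\]
so $S^\ast(v) = v \circ \alpha^{-1} = (\alpha^{-1})_\ast v$. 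In particular $(\alpha^{-1})_\ast$ maps $B(G_d)$ into itself and is completely bounded, as required.
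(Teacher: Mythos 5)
Your first step agrees with the paper: applying Lemma~\ref{BGlem3} and using bijectivity of $\alpha$ to conclude that the indicator function of the graph $\mathfrak{Gr}(\alpha^{-1})$ lies in $B(G_d \times G_d)$. After that, however, your argument has a genuine gap exactly where you flag ``the main obstacle.'' The currying step only produces a completely bounded map $\hat{u} \!: \cstar(G_d) \to B(G_d)$, and the passage from $\hat{u}$ to a completely bounded map $S \!: \cstar(G_d) \to \cstar(G_d)$ with $S(\omega_d(x)) = \omega_d(\alpha^{-1}(x))$ is asserted, not proved. The inequality $\|[S(a_{ij})]\|_{M_n(\cstar(G_d))} \leq \|\hat{u}\|_{\cb}\|[a_{ij}]\|_{M_n(\cstar(G_d))}$ does not follow from anything you have established: $\hat{u}(\omega_d(x))$ is the indicator function of the singleton $\{\alpha^{-1}(x)\}$ viewed in $B(G_d)$, which is an element of the \emph{dual} of $\cstar(G_d)$, and there is no elementary operator-space identification that converts control of $\hat{u}$ in the $M_n(B(G_d))$ norms into control of the matrix norms of $[\omega_d(\alpha^{-1}(x_{ij}))]$ inside $M_n(\cstar(G_d))$. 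Since $\alpha^{-1}$ need not be a homomorphism, no $^\ast$-homomorphic extension is available, and the idempotency $u^2 = u$ by itself does not supply the missing bound. In effect, the step you defer is equivalent to the hardest part of the statement.

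The paper closes precisely this gap by invoking structure theory rather than direct operator-space estimates: since the indicator function of $\mathfrak{Gr}(\alpha^{-1})$ is an idempotent in $B(G_d \times G_d)$, Host's idempotent theorem places $\mathfrak{Gr}(\alpha^{-1})$ in the coset ring of $G_d \times G_d$; by \cite[Lemma 1.2]{IlieSpronk05} this forces $\alpha^{-1}$ to be piecewise affine, and \cite[Corollary 3.2]{IlieSpronk05} then yields both that $(\alpha^{-1})_\ast$ maps $B(G_d)$ into itself and that it is completely bounded. If you want to complete your argument, you should route it through these two results (or reprove their content), as the conclusion really does depend on the structure of idempotents in Fourier--Stieltjes algebras and on the Ilie--Spronk characterization of completely bounded homomorphisms induced by piecewise affine maps.
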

\begin{proof}
As $\alpha$ is bijective, we have that
\[
  \{ (x,y) : x, y \in G, x = \alpha(y) \} = \{ (x,\alpha^{-1}(x)) : x \in G \},
\]
i.e., is the graph of $\alpha^{-1}$, which we denote by $\mathfrak{Gr}(\alpha^{-1})$. By Lemma \ref{BGlem3}, the indicator function of $\mathfrak{Gr}(\alpha^{-1})$ lies in $B(G_d \times G_d)$, which, by Host's idempotent theorem (\cite{Host84}) entails that $\mathfrak{Gr}(\alpha^{-1})$ lies in the \emph{coset ring} of $G_d \times G_d$, i.e., the ring of subsets of $G_d \times G_d$ generated by all cosets of subgroups of $G_d \times G_d$.
This, in turn, implies (\cite[Lemma 1.2]{IlieSpronk05}) that $\alpha^{-1} \!: G_d \to G_d$ is a so-called \emph{piecewise affine map} (see \cite{IlieSpronk05} for the definition). Finally, we conclude from \cite[Corollary 3.2]{IlieSpronk05} that $(\alpha^{-1})_\ast(B(G_d)) \subset B(G_d)$, and that $(\alpha^{-1})_\ast \!: B(G_d) \to B(G_d)$ is completely bounded.
\end{proof}
\par 
We can now prove the central result of this paper:
\begin{theorem} \label{BGthm}
Let $G$ be a locally compact group. Then the following are equivalent:
\begin{items}
\item $B(G)$ is Connes-amenable;
\item $B(G)$ has a normal, virtual diagonal;
\item $G$ is almost abelian.
\end{items}
\end{theorem}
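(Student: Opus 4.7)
My plan is to establish the cycle (iii) $\Rightarrow$ (ii) $\Rightarrow$ (i) $\Rightarrow$ (iii). The implication (ii) $\Rightarrow$ (i) is immediately available from Corollary \ref{VDcor}, and (iii) $\Rightarrow$ (ii) I would handle by the classical route alluded to in the introduction: when $G$ is almost abelian with abelian finite-index subgroup $H$, a normal virtual diagonal for $B(H) \cong M(\widehat{H})$ (which is straightforward in the abelian case) lifts to a normal virtual diagonal for $B(G)$ via a finite averaging argument over a transversal; this is the content of \cite[Proposition 3.1]{Runde04}.

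The substantive step is (i) $\Rightarrow$ (iii). Starting from Connes-amenability, Theorem \ref{CswDthm} supplies a $\Csw$-virtual diagonal $\boldsymbol{D}$ for $B(G)$. The preceding Example showed that the inversion map $\alpha(x) = x^{-1}$ is $\Csw$-admissible, and it is trivially bijective, so Corollary \ref{BGcor1} applies and delivers that $\mbox{\ }^\vee : B(G_d) \to B(G_d)$ is completely bounded. Unpacking through Lemma \ref{BGlem3} (and Proposition \ref{BGprop} with $\alpha = \id$ for comparison), one sees explicitly that the indicator function of the anti-diagonal $\{(x^{-1},x) : x \in G\}$ is a norm-controlled idempotent in $B(G_d \times G_d)$; Host's idempotent theorem then places this set in the coset ring of the discrete group $G_d \times G_d$.

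The main obstacle is converting this coset-ring condition into the group-theoretic statement that $G$ possesses an abelian subgroup of finite index. The subtlety is that $\{(x^{-1},x) : x \in G\}$ is a subgroup of $G \times G$ if and only if $G$ is abelian, so for a non-abelian $G$ its membership in the coset ring of $G_d \times G_d$ is a strong, non-automatic constraint. I would feed this information into the Ilie--Spronk machinery (\cite{IlieSpronk05}) to conclude that inversion, viewed as a map $G_d \to G_d$, must admit a piecewise-affine decomposition compatible with a \emph{coset-ring partition of $G_d$ itself}, and then argue that the only way this can be reconciled with the group-theoretic structure of inversion across all of $G_d$ is if $G$ has a finite-index abelian subgroup. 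This is where the bulk of the technical work is expected to lie, echoing -- but substantially strengthening -- the corresponding steps in \cite{ForrestRunde05} and \cite{Runde08}.
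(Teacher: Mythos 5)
Your cycle (iii) $\Rightarrow$ (ii) $\Rightarrow$ (i) $\Rightarrow$ (iii) and the first two implications match the paper exactly, and your handling of (i) $\Rightarrow$ (iii) agrees with the paper up to and including the application of Corollary \ref{BGcor1}: the inversion map is $\Csw$-admissible and bijective, the indicator function of the anti-diagonal $\{(y^{-1},y):y\in G\}$ lies in $B(G_d\times G_d)$, Host's theorem puts that set in the coset ring of $G_d\times G_d$, and Ilie--Spronk makes $(\alpha^{-1})_\ast = \mbox{\ }^\vee$ completely bounded on $B(G_d)$. But at the decisive last step you abandon what you have just proved and announce a different, unexecuted plan: you say you ``would feed this information into the Ilie--Spronk machinery'' and ``then argue'' that a coset-ring/piecewise-affine structure for inversion forces a finite-index abelian subgroup, conceding that ``the bulk of the technical work is expected to lie'' there. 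That is precisely the part of the argument that is not supplied, so as written the proof has a genuine gap: the passage from ``inversion is piecewise affine on $G_d$'' (or from ``$\mbox{\ }^\vee$ is completely bounded'') to ``$G$ is almost abelian'' is asserted, not proved.

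The paper closes this gap with a short citation rather than new work: since $\mbox{\ }^\vee$ is completely bounded on $B(G_d)$, it is completely bounded on the Fourier algebra $A(G)$, because $\mbox{\ }^\vee$ leaves $A(G)$ invariant and the inclusion $A(G)\subset B(G_d)$ is a complete isometry; then \cite[Proposition 1.5]{ForrestRunde05} states exactly that complete boundedness of $f\mapsto\check f$ on $A(G)$ forces $G$ to have an abelian subgroup of finite index. You should either quote that result (which is the intended endgame of Corollary \ref{BGcor1} --- note the corollary's conclusion is complete boundedness of $(\alpha^{-1})_\ast$, which you obtain but then do not use) or actually carry out your alternative: show that if inversion is affine on a coset $aH$ then $H$ must be abelian (a two-line computation), and then invoke B.\ H.\ Neumann's covering lemma to extract from the piecewise-affine decomposition a subgroup of finite index on whose coset inversion is affine. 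Either route works, but one of them has to be written down.
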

\begin{proof}
(iii) $\Longrightarrow$ (ii) is \cite[Proposition 3.1]{Runde04}, and (ii) $\Longrightarrow$ (i) is clear by general dual Banach algebra theory (Corollary \ref{VDcor}).
\par
(i) $\Longrightarrow$ (ii): Suppose that $B(G)$ is Connes-amenable and thus has a $\Csw$-virtual diagonal by Theorem \ref{CswDthm}. As $G \ni x \mapsto x^{-1}$ is $\Csw$-admissible, bijective, and its own inverse, we conclude from Corollary \ref{BGcor1} that $\mbox{\ }^\vee \!: B(G_d) \to B(G_d)$ is completely bounded. As $\mbox{\ }^\vee$ fixes the Fourier algebra $A(G)$ (see \cite{Eymard64} for its definition), and since the inclusion $A(G) \subset B(G_d)$ is a complete isometry, it follows that $\mbox{\ }^\vee \!: A(G) \to A(G)$ is completely bounded as well. By \cite[Proposition 1.5]{ForrestRunde05}, this is possible only if $G$ is almost abelian.
\end{proof}
\par 
If we replace $B(G)$ in Theorem \ref{BGthm} with $B_r(G)$, the corresponding equivalences remain true:
\begin{corollary} \label{BGcor2}
Let $G$ be a locally compact group. Then the following are equivalent:
\begin{items}
\item $B_r(G)$ is Connes-amenable;
\item $B_r(G)$ has a normal, virtual diagonal;
\item $G$ is almost abelian.
\end{items}
\end{corollary}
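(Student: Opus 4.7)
I would deduce Corollary \ref{BGcor2} by essentially reprising the proof of Theorem \ref{BGthm} in the reduced setting. The implication (iii) $\Longrightarrow$ (ii) is immediate: an almost abelian group is amenable (being a finite extension of an abelian group), so for such a $G$ the canonical quotient $\cstar(G) \to \cstar_r(G)$ is an isometric isomorphism, whence $B(G) = B_r(G)$ as dual Banach algebras; the normal, virtual diagonal supplied by Theorem \ref{BGthm} then serves as one for $B_r(G)$. The implication (ii) $\Longrightarrow$ (i) is Corollary \ref{VDcor}.

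For the substantive direction (i) $\Longrightarrow$ (iii), the plan is to rerun the proof of Theorem \ref{BGthm} with $B(G)$, $\cstar(G)$, and $\tensor_{\max}$ replaced by $B_r(G)$, $\cstar_r(G)$, and $\tensor_{\min}$, respectively. The identification $\cstar_r(G \times G) \cong \cstar_r(G) \tensor_{\min} \cstar_r(G)$ holds because the left regular representation of $G \times G$ is the Hilbert space tensor product of those of $G$, and since the injective Banach cross norm is dominated by any reasonable $\cstar$-cross norm, the identity on the algebraic tensor product still extends to a contraction into $\cstar_r(G) \tensor^\lambda \cstar_r(G)$. Combined with Corollary \ref{Cswcor}, this yields the analog of Lemma \ref{BGlem1}: a $B_r(G)$-bimodule contraction $\theta_r \!: \cstar_r(G \times G) \to \Csw((B_r(G) \tensor^\gamma B_r(G))^\ast)$. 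To produce the analog of Lemma \ref{BGlem2}, I would form the composition $\theta_r^{\ast\ast} \circ \pi \circ \rho \!: W^\ast(G_d \times G_d) \to \Csw((B_r(G) \tensor^\gamma B_r(G))^\ast)^{\ast\ast}$, where $\rho \!: W^\ast(G_d \times G_d) \to W^\ast(G \times G)$ is the universal map as in the original argument and $\pi \!: W^\ast(G \times G) \to W^\ast_r(G \times G)$ is the normal quotient induced by the surjection $\cstar(G \times G) \to \cstar_r(G \times G)$; its pre-adjoint is the required contraction $\Theta_r \!: \Csw((B_r(G) \tensor^\gamma B_r(G))^\ast)^\ast \to B(G_d \times G_d)$, which factors as $\Csw(\cdots)^\ast \to B_r(G \times G) \hookrightarrow B(G \times G) \hookrightarrow B(G_d \times G_d)$.

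With $\Theta_r$ in hand, the analogs of Proposition \ref{BGprop} (using functions in $A(G) \subset B_r(G)$ to separate points off the diagonal) and of Lemma \ref{BGlem3} go through without change; the involution $x \mapsto x^{-1}$ is $\Csw$-admissible for $B_r(G)$ by the same verification as in the example following Definition \ref{admdef}; and the analog of Corollary \ref{BGcor1} then yields that $\mbox{\ }^\vee \!: B(G_d) \to B(G_d)$ is completely bounded. The closing step---invoking that $A(G) \subset B(G_d)$ is a complete isometry together with \cite[Proposition 1.5]{ForrestRunde05}---then gives that $G$ is almost abelian, exactly as in Theorem \ref{BGthm}. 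The one point requiring genuine care is checking that $\pi$, and hence the full composition defining $\Theta_r$, respects the $B_r(G)$-bimodule structure; this reduces to the observation that the quotient $\cstar(G) \to \cstar_r(G)$ is a $B_r(G)$-bimodule homomorphism, which in turn follows because $B_r(G) \subset B(G)$ is a subalgebra. This is the main technical subtlety in adapting the universal argument to the reduced one.
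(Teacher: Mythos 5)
Your proof is correct, and your treatment of (iii) $\Longrightarrow$ (ii) and (ii) $\Longrightarrow$ (i) coincides with the paper's. For the substantive direction (i) $\Longrightarrow$ (iii), however, you take a genuinely different and much more laborious route than the paper does. The paper's argument is a reduction: a Connes-amenable dual Banach algebra has an identity (\cite[Proposition 4.1]{Runde01}); since $B_r(G)$ is a closed ideal of $B(G)$ containing $A(G)$, an identity for $B_r(G)$ must be the constant function $1$, whence $B(G) = 1 \cdot B(G) \subset B_r(G)$, so $B_r(G) = B(G)$ and Theorem \ref{BGthm} applies directly---no reduced analogs of Lemmas \ref{BGlem1}--\ref{BGlem3} are ever needed. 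Your plan to rerun the entire machinery with $\cstar_r$, $\tensor_{\min}$, and $W^\ast_r$ does appear to go through (the spatial $\cstar$-norm dominates the injective Banach norm, the quotient $\cstar(G) \to \cstar_r(G)$ is indeed a $B_r(G)$-module map, and $A(G) \subset B_r(G)$ supplies the separating functions), but it is redundant: already at the stage of your analog of Proposition \ref{BGprop} you would need $\Delta_\sigma^w \boldsymbol{D}$ to be an identity for $B_r(G)$, and testing against $A(G)$ forces it to be the constant function $1$; but $1 \in B_r(G)$ is equivalent to amenability of $G$, which gives $B_r(G) = B(G)$ and lets you quote Theorem \ref{BGthm} without any further work. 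In short, what your approach buys is self-containedness in the reduced setting at the cost of considerable re-verification; what the paper's approach buys is a two-line deduction from the universal case, exploiting the ideal structure of $B_r(G)$ in $B(G)$.
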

\begin{proof}
(iii) $\Longrightarrow$ (ii): If $G$ has an abelian subgroup of finite index, then $G$ is, in particular, amenable, so that $B_r(G) = B(G)$. Hence, $B_r(G)$ has a normal, virtual diagonal by \cite[Proposition 3.1]{Runde04}.
\par 
(ii) $\Longrightarrow$ (i) follows, as the corresponding implication in Theorem \ref{BGthm}, from general dual Banach algebra theory.
\par 
(i) $\Longrightarrow$ (iii): If $B_r(G)$ is Connes-amenable, it must have an identity (\cite[Proposition 4.1]{Runde01}). As $B_r(G)$ is a closed ideal of $B(G)$ (\cite[(2.16) Proposition]{Eymard64}, this means that $B_r(G) = B(G)$. The claim then follows from Theorem \ref{BGthm}.
\end{proof}
\begin{remark}
Both Theorem \ref{BGthm} and Corollary \ref{BGcor2} show that Connes-amenability is a rather restrictive condition to impose on Fourier--Stieltjes algebras. Comparing the main results of \cite{Ruan95} and \cite{ForrestRunde05}, one is led to believe that probably the best notion of amenability to apply to Fourier--Stieltjes algebras should be some hybrid notion of Connes-amenability and \emph{operator amenability}, as introduced by Z.-J.\ Ruan in \cite{Ruan95}. In \cite{RundeSpronk04}, the first-named author and N.\ Spronk introduced such a hybrid notion---appropriately dubbed \emph{operator Connes-amenability}. In view of \cite{Johnson72a}, \cite{Ruan95}, and \cite{Runde03a}, the natural guess was that, for a locally compact group $G$, both $B(G)$ and $B_r(G)$ are operator Connes-amenable if and only if $G$ is amenable. This is indeed true for $B_r(G)$; however, $B(\free_2)$, with $\free_2$ being the free group in two generators, is operator Connes-amenable even though $\free_2$ fails to be amenable (see \cite{RundeSpronk04}).
\end{remark}
\dated
\vfill
\begin{tabbing} 
\textit{Second author's address}: \= \kill
\textit{First author'sddress}:    \> Department of Mathematical and Statistical Sciences \\
                                  \> University of Alberta \\
                                  \> Edmonton, Alberta \\
                                  \> Canada T6G 2G1 \\[\medskipamount]
\textit{E-mail}:                  \> \texttt{vrunde@ualberta.ca} \\[\bigskipamount]
\textit{Second author's address}: \> Department of Mathematics and Statistics \\
                                  \> College of Arts and Sciences \\
                                  \> American University of Sharjah \\
                                  \> P.O.\ Box 26666 \\
                                  \> Sharjah \\
                                  \> United Arab Emirates \\[\medskipamount]
\textit{E-mail}:                  \> \texttt{fuygul@aus.edu}            
\end{tabbing}

\begin{thebibliography}{DHSW12}
%
\begin{small} \renewcommand{\baselinestretch}{1}
%
\bibitem[Con76]{Connes76} \textsc{A.\ Connes}, Classification of injective factors. Cases $\mathrm{II}_1$, $\mathrm{II}_\infty$, $\mathrm{III}_\lambda$, $\lambda \neq 1$. \textit{Ann.\ of Math.}\ \textbf{104} (1976), 73--115.
%
\bibitem[CG98]{CorachGale98} \textsc{G.\ Corach} and \textsc{J.\ E.\ Gal\'e}, Averaging with virtual diagonals and geometry of representations. In: \textsc{E.\ Albrecht} and \textsc{M.\ Mathieu} (ed.s), \textit{Banach Algebras '97}, pp.\ 87--100. Walter de Grutyer, 1998.
%
\bibitem[DGH02]{DalesGhahramaniHelemskii02} \textsc{H.\ G.\ Dales}, \textsc{F.\ Ghahramani}, and \textsc{A.\ Ya.\ Helemski\u{\i}}, The amenability of measure algebras. \textit{J.\ London Math.\ Soc.}\ \textbf{66} (2002), 213--226.
%
\bibitem[Daw07]{Daws07} \textsc{M.\ Daws}, Dual Banach algebras: representations and injectivity. \textit{Studia Math.}\ \textbf{178} (2007), 231--275.
%
\bibitem[Daw11]{Daws11} \textsc{M.\ Daws}, A bicommutant theorem for dual Banach algebras. \textit{Math.\ Proc.\ R.\ Ir.\ Acad.} \textbf{111A} (2011), 21--28. 
%
\bibitem[DHSW12]{DawsHaydonSchlumprechtWhite12} \textsc{M.\ Daws}, \textsc{R.\ Haydon}, \textsc{T.\ Schlumprecht}, and \textsc{S.\ White}, Shift invariant preduals of $\ell^1(\ints)$. \textit{Israel J.\ Math.}\ \textbf{192}  (2012), 541--585. 
%
\bibitem[Eff88]{Effros88} \textsc{E.\ G.\ Effros}, Amenability and virtual diagonals for von Neumann algebras. \textit{J.\ Funct.\ Anal.}\ \textbf{78} (1988), 137--156.
%
\bibitem[Eym64]{Eymard64} \textsc{P.\ Eymard}, L'alg\`ebre de Fourier d'un groupe localement compact. \textit{Bull.\ Soc.\ Math.\ France} \textbf{92} (1964), 181--236.
%
\bibitem[ER00]{EffrosRuan00} \textsc{E.\ G.\ Effros} and \textsc{Z.-J.\ Ruan}, \textit{Operator Spaces}, London Mathematical Society Monographs (New Series) \textbf{23}. Clarendon Press, 2000.
%
\bibitem[FR05]{ForrestRunde05} \textsc{B.\ E.\ Forrest} and \textsc{V.\ Runde}, Amenability and weak amenability of the Fourier algebra. \textit{Math.\ Z.}\ \textbf{250} (2005), 731--744.
%
\bibitem[Hel91]{Helemskii91} \textsc{A.\ Ya.\ Helemski\u{\i}}, Homological essence of amenability in the sense of A.\ Connes: the injectivity of the predual bimodule (translated from the Russion). \textit{Math.\ USSR-Sb.}\ \textbf{68} (1991), 555--566. 
%
\bibitem[Hos84]{Host84} \textsc{B.\ Host}, Le th\'eor\`eme des idempotents dans $B(G)$. \textit{Bull.\ Soc. Math.\ France} \textbf{114} (1986), 215--223. 
%
\bibitem[IS05]{IlieSpronk05} \textsc{M.\ Ilie} and \textsc{N.\ Spronk}, Completely bounded homomorphisms of the Fourier algebras. \textit{J.\ Funct.\ Anal.}\ \textbf{225} (2005), 480--499.
%
\bibitem[Joh72a]{Johnson72a} \textsc{B.\ E.\ Johnson}, Cohomology in Banach algebras. \textit{Mem.\ Amer.\ Math.\ Soc.}\ \textbf{127} (1972).
%
\bibitem[Joh72b]{Johnson72b} \textsc{B.\ E.\ Johnson}, Approximate diagonals and cohomology of certain annihilator Banach algebras. \textit{Amer.\ J.\ Math.}\ \textbf{94} (1972), 685--698. 
%
\bibitem[JKR72]{JohnsonKadisonRingrose72} \textsc{B.\ E.\ Johnson}, \textsc{R.\ V.\ Kadison}, and \textsc{J.\ R.\ Ringrose}, Cohomology of operator algebras. III. Reduction to normal cohomology. \textit{Bull.\ Soc.\ Math.\ France} \textbf{100} (1972), 73--96. 
%
\bibitem[Ped88]{Pedersen88} \textsc{G.\ K.\ Pedersen}, \textit{Analysis Now}. Graduate Texts in Mathematics \textbf{118}, Springer Verlag, 1988.
%
\bibitem[Rua95]{Ruan95} \textsc{Z.-J.\ Ruan}, The operator amenability of $A(G)$. \textit{Amer.\ J.\ Math.}\ \textbf{117} (1995), 1449--1474.
%
\bibitem[Run01]{Runde01} \textsc{V.\ Runde}, Amenability for dual Banach algebras. \textit{Studia Math.}\  \textbf{148} (2001), 47--66.
%
\bibitem[Run02]{Runde02} \textsc{V.\ Runde}, \textit{Lectures on Amenability}. Lecture Notes in Mathematics \textbf{1774}, Springer Verlag, 2002.
%
\bibitem[Run03a]{Runde03a} \textsc{V.\ Runde}, Connes-amenability and normal, virtual diagonals for measure algebras, I. \textit{J.\ London Math.\ Soc.}\ \textbf{67} (2003), 643--656.
%
\bibitem[Run03b]{Runde03b} \textsc{V.\ Runde}, Connes-amenability and normal, virtual diagonals for measure algebras, II. \textit{Bull.\ Austral.\ Math.\ Soc.}\ \textbf{68} (2003), 325--328.
%
\bibitem[Run04]{Runde04} \textsc{V.\ Runde}, Dual Banach algebras: Connes-amenability, normal, virtual diagonals, and injectivity of the predual bimodule. \textit{Math.\ Scand.} \textbf{95} (2004), 124--144.
%
\bibitem[Run06]{Runde06} \textsc{V.\ Runde}, A Connes-amenable, dual Banach algebra need not have a normal, virtual diagonal. \textit{Trans.\ Amer.\ Math.\ Soc.}\ \textbf{358} (2006), 391--402.
%
\bibitem[Run08]{Runde08} \textsc{V.\ Runde}, The amenability constant of the Fourier algebra. \textit{Proc.\ Amer.\ Math.\ Soc.}\ \textbf{134} (2006), 1473--1481.
%
\bibitem[Run15]{Runde15} \textsc{V.\ Runde}, Erratum to: ``A Connes-amenable, dual Banach algebra need not have a normal, virtual diagonal''. \textit{Trans.\ Amer.\ Math.\ Soc.}\ \textbf{367} (2015), 751--754.
%
\bibitem[RS04]{RundeSpronk04} \textsc{V.\ Runde} and \textsc{N.\ Spronk}, Operator amenability of Fourier--Stieltjes algebras. \textit{Math.\ Proc.\ Cambridge Philos.\ Soc.}\ \textbf{136} (2004), 675--686.
%
\bibitem[Tak02]{Takesaki02} \textsc{M.\ Takesaki}, \textit{Theory of Operator Algebras}, I. Encyclopaedia of Mathematical Sciences \textbf{124}, Springer Verlag, 2002.
%
\bibitem[Tak03]{Takesaki03} \textsc{M.\ Takesaki}, \textit{Theory of Operator Algebras}, III. Encyclopaedia of Mathematical Sciences \textbf{127}, Springer Verlag, 2003.
%
\bibitem[Was76]{Wassermann76} \textsc{S.\ Wassermann}, On tensor products of certain group $\cstar$-algebras. \textit{J.\ Funct.\ Anal} \textbf{23} (1976), 239--254.
%
\end{small} \renewcommand{\baselinestretch}{1.2}
%
\end{thebibliography}
\end{document}